\newtheorem{theorem}{Theorem}[section]
\newtheorem{lemma}[theorem]{Lemma}
\newtheorem{proposition}[theorem]{Proposition}
\theoremstyle{definition}
\newtheorem{definition}[theorem]{Definition}
\newcommand\R{\mathbb R}
\newcommand\AssI{{\rm Ass}_n^{\rm I}}
\newcommand\AssII{{\rm Ass}_n^{\rm II}}
\newcommand\AssIII{{\rm Ass}_n^{\rm III}}
\title{Three non-equivalent realizations of the associahedron}
\author{Cesar Ceballos}
\address[Cesar Ceballos]{Inst.\ Mathematics, MA 6-2, TU Berlin, 10623 Berlin, Germany.}  \email{ceballos@math.tu-berlin.de}
\author{G\"unter M. Ziegler}
\address[G\"unter M. Ziegler]{Inst.\ Mathematics, MA 6-2, TU Berlin, 10623 Berlin, Germany.}  \email{ziegler@math.tu-berlin.de}
\date{}                                      
\thanks{The first author is supported by DFG via the Research Training Group ``Methods for Discrete Structures'';
the second author is partially supported by DFG. We are grateful to Carsten Lange and Anton Dochterman for 
helpful discussions and comments about this paper.}
\begin{document}
\maketitle

\begin{abstract}
We review three realizations of the associahedron that arise as secondary polytopes, 
from cluster algebras, and as Minkowski sums of simplices, and show that under any choice of parameters, the resulting associahedra are affinely non-equivalent.\\
{\bf Note:} the results of this preprint have been included in a more comprehensive paper, 
jointly with Francisco Santos, arXiv:1109.5544.
\end{abstract}

\section{Introduction}

The associahedron, also known as the Stasheff polytope, is a ``mythical" simple polytope that was first described as a combinatorial object by Stasheff in 1963, and was used to explore associativity of $H$-spaces. Three ``conceptual" constructions of the associahedron as a polytope, among numerous others, are: the associahedron as a secondary polytope due to Gelfand, Zelevinsky and Kapranov \cite{GZK90} \cite{GZK91} (see also \cite[Chap.~7]{GKZ94}), the associahedron associated to the cluster complex of type $A_n$ due to Chapoton, Fomin and Zelevinsky \cite{CFZ02}, and the associahedron as a Minkowski sum of simplices introduced by Postnikov in \cite{Po05}. Each one of these realizations depends on a large number of parameters that, a priori, might be chosen appropriately so that the three constructions produce equivalent objects. The main result of this paper is to show that regardless of 
how the parameters are chosen, the three realizations are affinely non-equivalent.

\begin{figure}[ht]
	\centering
	\includegraphics[width=.82\textwidth]{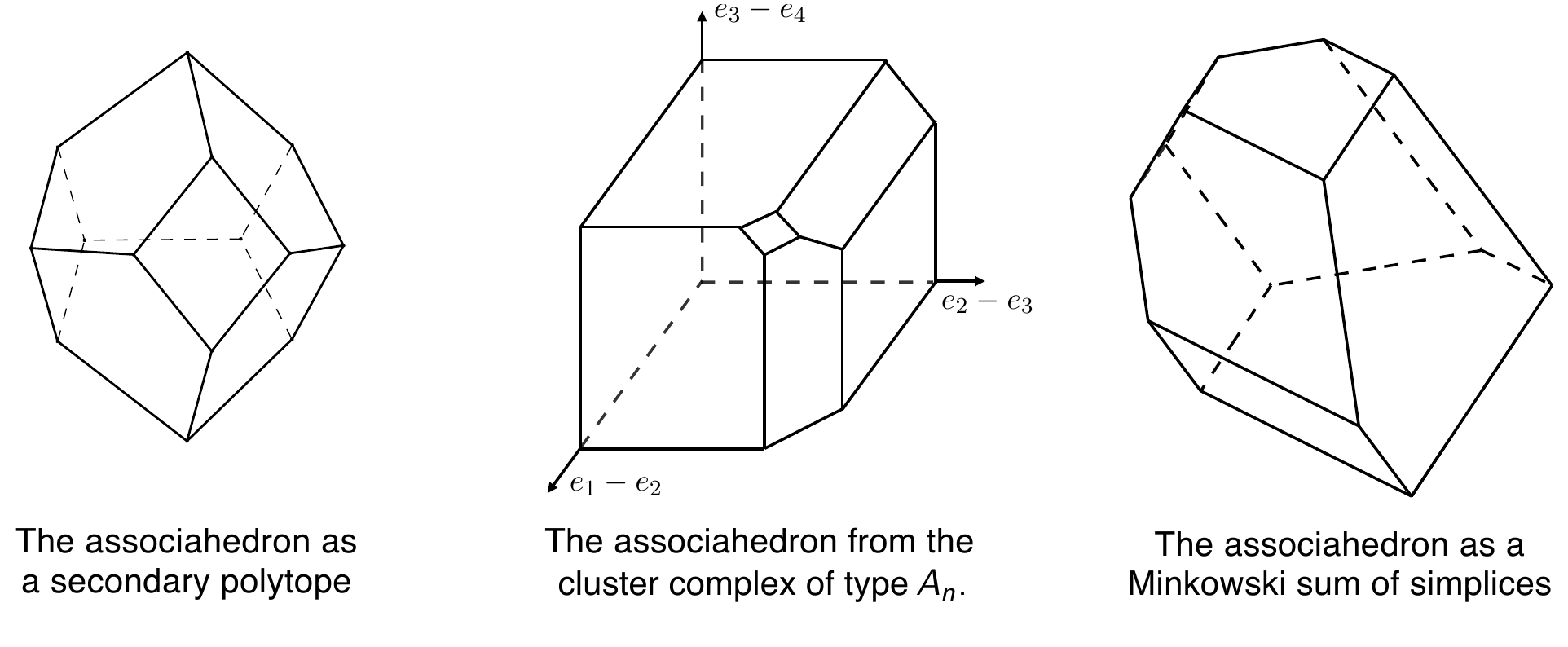}
\end{figure}

The paper is organized as follows. Section~\ref{review} starts with a quick review of associahedra. In Section~\ref{section_secondarypolytope} we review the construction of the associahedron as a secondary polytope of a convex polygon and prove in Theorem~\ref{theo_par_cont^I} that it has no pairs of parallel facets. Section \ref{section_clustercomplex} is a review of the construction of associahedra using the cluster complex of type $A_n$, which yields an $n$-dimensional associahedron with $n$ pairs of parallel facets. Theorem~\ref{theorem_Ass^II_parallel} describes these pairs in terms of pairs of diagonals of an $(n+3)$-gon. In Section~\ref{section_minkowski_sums} we review the construction of the $n$-dimensional associahedron as a Minkowski sum of simplices, and provide in Theorem~\ref{theorem_ass^III_correspondece} a precise description of a correspondence between faces of this polytope and subdivisions of an $(n+3)$-gon. We prove in Theorem~\ref{theorem_parallel_ass^III} that this associahedron has $n$ pairs of parallel facets, and we identify the corresponding pairs of diagonals of an $(n+3)$-gon. Finally, in Section~\ref{twotheorems} we show that all three types of realizations are affinely non-equivalent for any choice of parameters.
 
\tableofcontents

\section{Three realizations of the associahedron}\label{review}

We start by recalling the definition of an $n$-dimensional associahedron in terms of polyhedral subdivisions of an $(n+3)$-gon.

\begin{definition}
The \emph{associahedron} $\mathrm{Ass}_n$ is an $n$-dimensional simple polytope whose face lattice is isomorphic to the lattice of polyhedral subdivisions (without new vertices) of a convex $(n+3)$-gon ordered by refinement.
\end{definition}

\begin{figure}[ht] 
   \centering
   \includegraphics[width=0.6\textwidth]{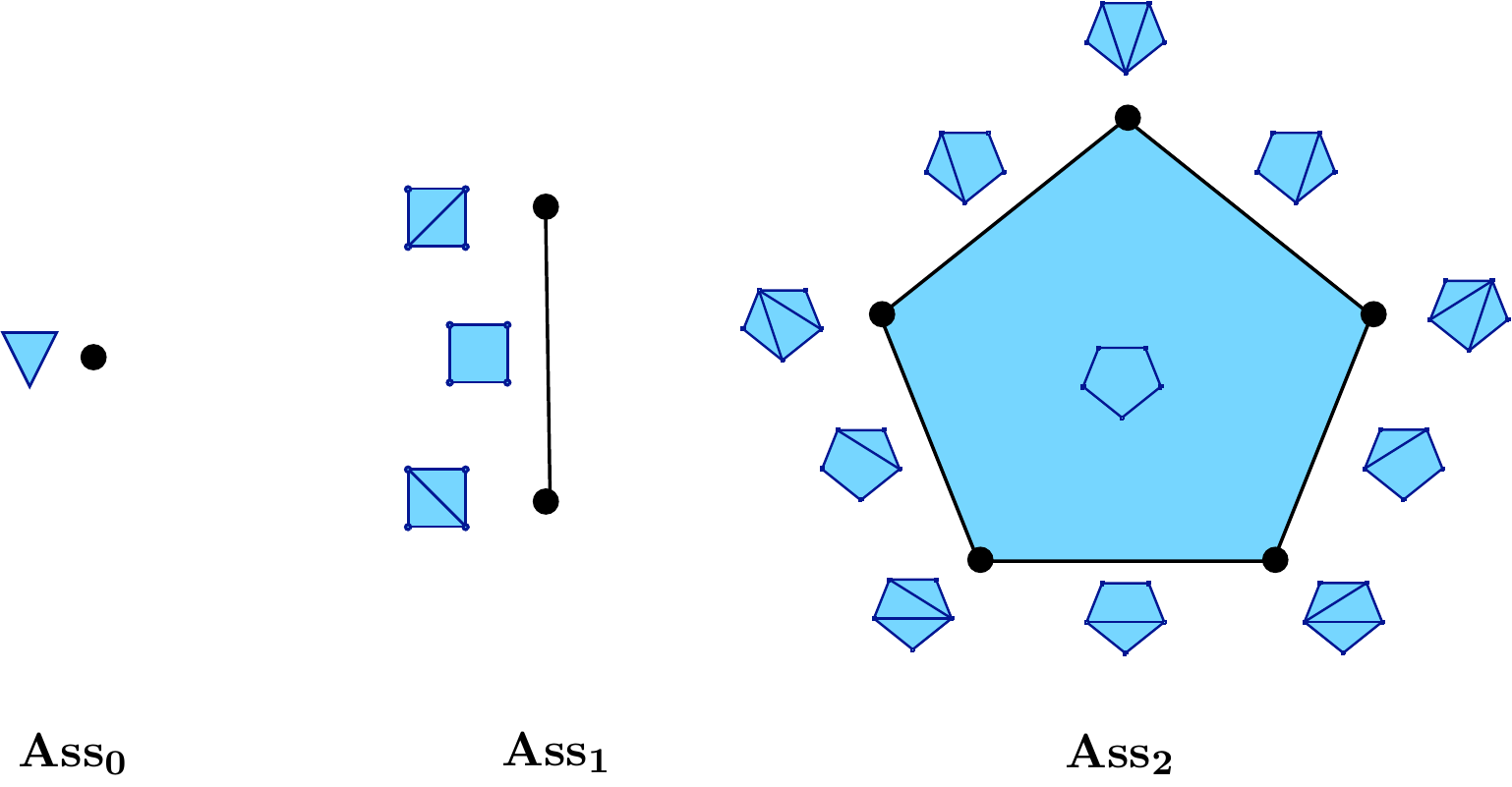} 
   \caption{The associahedron $\mathrm{Ass}_n$ for $n=1,2$ and 3.}
   \label{polyre1-1}
\end{figure}

Figure \ref{polyre1-1} illustrates the polytope $\mathrm{Ass}_n$ for $n=0,1$ and $2$. Such a polytope exists for every $n\ge0$. We will present three constructions for it,
which will be shown to be essentially different in the last section of this paper.

\subsection{Construction I: The associahedron as a secondary polytope}
\label{section_secondarypolytope}

The secondary polytope is an ingenious construction motivated by the theory of hypergeometric functions as developed by I.M. Gelfand, M. Kapranov and A. Zelevinsky \cite{GKZ94}. In this section we recall the basic definitions and main results related to this topic, which yield in particular that the secondary polytope of any convex $(n+3)$-gon is an $n$-dimensional associahedron. For more detailed presentations we refer to \cite[Lect 9]{Zi} 
and \cite[\S 5]{LoRaSa10}.

\subsubsection{The secondary polytope construction} 

\begin{definition}[GKZ vector/secondary polytope] \label{def_GKZ}
Let $Q$ be a $d$-dimensional convex polytope with $n$ vertices. The \emph{GKZ vector} 
$v(t)\in \R^n$ of a triangulation $t$ of $Q$ is 
\begin{eqnarray*}
v(t)\ := \ \sum_{i=1}^n \text{vol} (\text{star}_t(i)) e_i 
\ \ =\ \ \sum_{i=1}^n \sum_{\sigma\in t\,:\, i\in\sigma}    \text{vol} (\sigma) e_i
\end{eqnarray*} 
The \emph{secondary polytope} of $Q$ is defined as
\[
\Sigma (Q)\ :=\ \text{conv}\{ v(t) : t \text{ is a triangulation of } Q \}.
\]
\end{definition}

\begin{proposition} [Gelfand--Kapranov--Zelevinsky  \cite{GZK90}] \label{theoGKZ}
Let $Q$ be a $d$-dimensional convex polytope with $n$ vertices. Then
the secondary polytope $\Sigma (Q)$ has the following properties:
\begin{enumerate}
\item $\Sigma (Q)$ is an $(n-d-1)$-dimensional polytope.
\item The vertices of $\Sigma (Q)$ are in bijective correspondence with the regular triangulations of $Q$ without new vertices.
\item The faces of $\Sigma (Q)$ are in bijective correspondence with the regular subdivisions of~$Q$.
\item The face lattice of $\Sigma (Q)$ is isomorphic to the lattice of regular subdivisions of $Q$, ordered by refinement.
\end{enumerate}
\end{proposition}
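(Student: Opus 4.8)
The plan is to identify the normal fan of $\Sigma(Q)$ with the \emph{secondary fan} of $Q$, whose cones are indexed by regular subdivisions; all four assertions then follow from standard normal-fan/face-lattice duality. Fix the vertices $p_1,\dots,p_n$ of $Q$. To each weight vector $\omega\in\R^n$ I associate the lifted points $(p_i,\omega_i)\in\R^{d+1}$, and I let $S(\omega)$ be the regular subdivision of $Q$ obtained by projecting the lower faces of $\mathrm{conv}\{(p_i,\omega_i)\}$ back to $\R^d$; by definition the regular subdivisions are exactly those of the form $S(\omega)$. The set of $\omega$ inducing a fixed regular subdivision is a relatively open polyhedral cone, and I would first check that these cones fit together into a complete polyhedral fan, with refinement of subdivisions corresponding to one cone being a face of another.

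The heart of the argument is a single identity. For a triangulation $t$ and a weight $\omega$, let $g_{t,\omega}\colon Q\to\R$ be the function that is affine on each simplex of $t$ and takes the value $\omega_i$ at $p_i$. Since the average of an affine function over a simplex equals its value at the barycenter, i.e.\ the mean of its vertex values, integrating over each $\sigma\in t$ and summing gives
\[
\int_Q g_{t,\omega}\ =\ \frac{1}{d+1}\sum_{\sigma\in t}\mathrm{vol}(\sigma)\sum_{i\in\sigma}\omega_i\ =\ \frac{1}{d+1}\,\langle\omega,v(t)\rangle .
\]
Thus minimizing the linear functional $\omega\mapsto\langle\omega,v(t)\rangle$ over all triangulations $t$ is the same as minimizing $\int_Q g_{t,\omega}$. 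The next step is the observation that the lower envelope of the lifted points is the pointwise minimum among all interpolants $g_{t,\omega}$, so its projection---the regular triangulation $t_\omega$ induced by $\omega$---is precisely the minimizer. Consequently $v(t_\omega)$ is the vertex of $\Sigma(Q)$ at which $\langle\omega,\cdot\rangle$ is minimized.

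From here the four statements drop out. The face of $\Sigma(Q)$ minimizing a given $\omega$ is the convex hull of the $v(t)$ for all triangulations $t$ refining $S(\omega)$; this yields the order-preserving bijection between faces of $\Sigma(Q)$ and regular subdivisions of $Q$ (statements (3) and (4)), the vertices corresponding to the finest subdivisions, that is, the regular triangulations (statement (2)). For the dimension (1), I would note that replacing $\omega_i$ by $\omega_i+\langle a,p_i\rangle+b$ leaves every lower hull, hence $S(\omega)$, unchanged; these affine weights form a $(d+1)$-dimensional lineality space common to all secondary cones. Dually this forces $\Sigma(Q)$ into an affine subspace of dimension $n-(d+1)$: testing $v(t)$ against $\omega=\mathbf 1$ and against the $d$ coordinate functions gives $d+1$ affine relations satisfied by every $v(t)$, namely $\sum_i v(t)_i=(d+1)\mathrm{vol}(Q)$ and $\sum_i v(t)_i\,p_i=(d+1)\int_Q x\,\mathrm dx$, both independent of $t$; completeness of the secondary fan modulo this lineality gives equality, so $\dim\Sigma(Q)=n-d-1$.

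I expect the main obstacle to be the bookkeeping that upgrades the vertex statement to the full face-lattice isomorphism: one must verify that the secondary cones genuinely form a fan (that the minimizing face over a weight $\omega$ depends only on $S(\omega)$ and that lower-dimensional cones are faces of their refinements), and that distinct regular subdivisions yield distinct faces, so the correspondence is a bijection and not merely a surjection. Checking that the lineality space of the secondary fan is \emph{exactly} the $(d+1)$-dimensional space of affine weights---needed for the dimension count to be sharp---is the other point requiring care.
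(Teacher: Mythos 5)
The paper does not prove Proposition~\ref{theoGKZ} at all: it is imported as a known result of Gelfand--Kapranov--Zelevinsky \cite{GZK90}, with \cite[Lect.~9]{Zi} and \cite[\S 5]{LoRaSa10} cited as places where detailed treatments can be found. So there is no in-paper argument to compare yours against; what you have written is, in outline, exactly the standard secondary-fan proof given in those references. Your central identity checks out: $\langle\omega,v(t)\rangle=(d+1)\int_Q g_{t,\omega}$, so minimizing $\langle\omega,\cdot\rangle$ over GKZ vectors is the same as minimizing the integral of the piecewise-affine interpolant, and the minimum is attained precisely by the triangulations refining $S(\omega)$. The dimension relations are also correct, since pairing $v(t)$ with $\mathbf{1}$ and with the coordinate functionals gives $\sum_i v(t)_i=(d+1)\operatorname{vol}(Q)$ and $\sum_i v(t)_i\,p_i=(d+1)\int_Q x\,\mathrm{d}x$, which are $d+1$ independent affine constraints because the $p_i$ affinely span $\R^d$. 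One point you use implicitly and should state: because the configuration here is exactly the vertex set of $Q$, every $p_i$ is a vertex of every subdivision and $(p_i,\omega_i)$ always lies \emph{on} the lower envelope; this is what makes $g_{t,\omega}$ well defined for all triangulations and makes the equivalence ``$\int_Q g_{t,\omega}$ is minimal $\Leftrightarrow$ $t$ refines $S(\omega)$'' clean. (For general point configurations with interior or unused points this step needs modification.)

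The gaps you flag at the end are the genuine ones, and they are not free. Injectivity --- that distinct regular subdivisions give distinct faces --- requires showing that a regular subdivision is determined by its set of refining triangulations; concretely, if $S'$ strictly refines $S$ one must exhibit a triangulation refining $S$ but crossing $S'$ (a pulling triangulation inside a cell of $S$ at a vertex off the relevant hyperplane does this, but it is an argument, not an observation). Similarly, the fan property and the sharpness of the lineality space both reduce to lemmas of the type ``$S(\omega+\epsilon\omega')$ refines $S(\omega)$ for small $\epsilon>0$'' and ``if $\langle\omega,v(t)\rangle$ is constant in $t$ then $S(\omega)$ is the trivial subdivision, forcing $\omega$ to be an affine weight.'' With these lemmas supplied your sketch becomes a complete and correct proof along the standard lines; as written, statements (3), (4), and the lower bound in (1) are correctly reduced to these lemmas but not yet established. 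Since you identified precisely where the remaining work lies, this is an honest and essentially sound reconstruction of the cited result rather than a flawed one.
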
 

\subsubsection{The associahedron as the secondary polytope of a convex $(n+3)$-gon}

\begin{definition} $\AssI(Q)\subset \R^{n+3}$ is defined as
the ($n$-dimensional) secondary polytope of a convex $(n+3)$-gon $Q\subset\R^2$:
\[
\AssI(Q):=\Sigma (Q).
\]
\end{definition}

\begin{proposition} [GKZ]
$\AssI(Q)$ is an $n$-dimensional associahedron.
\end{proposition}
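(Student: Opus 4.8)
The plan is to read off both the dimension and the face lattice of $\AssI(Q)=\Sigma(Q)$ directly from Proposition~\ref{theoGKZ}, and then to reconcile the \emph{regular} subdivisions appearing there with the \emph{all} polyhedral subdivisions appearing in the definition of $\mathrm{Ass}_n$. Since $Q$ is a convex polygon we have $d=2$ and exactly $n+3$ vertices, so part~(1) of Proposition~\ref{theoGKZ} gives $\dim\Sigma(Q)=(n+3)-2-1=n$, which matches the required dimension.

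The heart of the argument is the following claim, which I expect to be the main obstacle: every polyhedral subdivision (without new vertices) of a convex polygon $Q$ is regular. Granting this, part~(4) of Proposition~\ref{theoGKZ} identifies the face lattice of $\Sigma(Q)$ with the lattice of regular subdivisions of $Q$ ordered by refinement, which by the claim is the full lattice of polyhedral subdivisions of $Q$ ordered by refinement; this is precisely the lattice appearing in the definition of $\mathrm{Ass}_n$. It then remains only to note that $\Sigma(Q)$ is simple: each vertex corresponds to a triangulation of the $(n+3)$-gon, which uses exactly $n$ diagonals, and the facets containing that vertex are exactly the $n$ single-diagonal (coarsest non-trivial) subdivisions obtained by keeping exactly one of those diagonals; hence every vertex lies in exactly $n=\dim\Sigma(Q)$ facets. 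Thus $\Sigma(Q)$ is an $n$-dimensional simple polytope whose face lattice is that of $\mathrm{Ass}_n$, i.e.\ an $n$-dimensional associahedron.

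To prove the claim I would induct on the number of vertices, treating triangulations first. A triangulation $T$ of a convex $m$-gon with $m\ge4$ has an ear, i.e.\ a triangle $\tau=uvw$ whose edges $uv$ and $vw$ are boundary edges of $Q$; deleting $v$ yields a triangulation $T'$ of the convex $(m-1)$-gon $Q'=\mathrm{conv}(V\smallsetminus\{v\})$, which by induction is regular for some height vector $\omega'\colon V\smallsetminus\{v\}\to\R$. Writing $H'$ for the affine plane spanned by the lift of the triangle of $T'$ incident to the new boundary edge $uw$ of $Q'$, I would extend $\omega'$ to $\omega$ by choosing $\omega(v)$ strictly below the value of $H'$ at the point $v$. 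Since $v$ projects outside $Q'$ and the only edge of $Q'$ separating it from $v$ is $uw$, this choice makes the fold along $uw$ convex and attaches $\tau$ as a single new lower face without altering the lower hull over $Q'$; hence $\omega$ induces exactly $T$, while the base case $m=3$ is trivial. The same cell-by-cell lifting technique yields regularity of arbitrary subdivisions; alternatively, one invokes the classical fact \cite{GKZ94} that point configurations in convex position in the plane admit only regular subdivisions.
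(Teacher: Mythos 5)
The paper itself gives no proof of this proposition: it is quoted as a theorem of Gelfand--Kapranov--Zelevinsky, and implicitly rests on Proposition~\ref{theoGKZ} together with the classical fact that for points in convex position in the plane \emph{every} polyhedral subdivision is regular. Your proposal is exactly this standard route: the dimension count $(n+3)-2-1=n$ is right, the reduction of the face-lattice statement to the regularity claim is right, and the simplicity argument (a vertex, i.e.\ a triangulation, lies on precisely the $n$ facets given by its $n$ diagonals) is correct. So in outline you have reconstructed the argument the paper delegates to \cite{GZK90}, \cite{GKZ94}.

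However, your inductive proof of the regularity claim has a concrete error: the lifting inequality is reversed. Under the lower-hull convention you adopt, the induced subdivision is the domain-of-linearity complex of a \emph{convex} piecewise-linear function, and a convex PL function is the pointwise \emph{maximum} of its linear pieces; hence to attach the ear $\tau=uvw$ as a new lower facet while preserving the lower hull over $Q'$ you must lift $v$ strictly \emph{above} the extended plane $H'$ of the triangle of $T'$ adjacent to $uw$ (equivalently, high enough), not strictly below it. If $\omega(v)<H'(v)$, the lifted $v$ lies below the supporting plane of that old facet, so that facet is destroyed and the diagonal $uw$ gets flipped: the lift induces the wrong triangulation. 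A minimal check: for the unit square $p_1p_2p_3p_4$ with heights $(0,0,0,-\epsilon)$, the lower hull uses the diagonal $p_2p_4$, not the ear triangle $p_1p_3p_4$; with heights $(0,0,0,+\epsilon)$ one gets the ear, as desired. (If you intended the upper-face/concave convention, ``below'' would be correct, but then these are not lower faces; as written the two conventions are mixed.) A second, smaller gap is that the case of general (non-triangulation) subdivisions is only gestured at, while the lattice isomorphism with $\mathrm{Ass}_n$ needs regularity of \emph{all} subdivisions. Both issues are repaired either by the one-word fix above plus an analogous cell-by-cell argument, or by your fallback citation of the classical convex-position fact, so the proposal is sound once the sign is corrected.
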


\begin{theorem}\label{theo_par_cont^I}
$\AssI(Q)$ has no parallel facets for $n\geq 2$.
\end{theorem}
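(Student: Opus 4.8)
The plan is to identify the facets of $\AssI(Q)$ with the diagonals of the $(n+3)$-gon $Q$ and to compute the associated outer facet normals explicitly, so that ``parallel facets'' becomes a concrete linear condition that I can refute. By Proposition~\ref{theoGKZ} the facets of $\Sigma(Q)$ correspond to the coarsest nontrivial regular subdivisions of $Q$, and for a convex polygon these are exactly the subdivisions obtained by drawing a single diagonal $\delta$. In the secondary (normal) fan the outer normal ray of this facet is represented by the height vector $\omega^\delta\in\R^{n+3}$ given at the vertices by the ``ramp'' $\omega^\delta_i=\max\{0,\ell_\delta(p_i)\}$, where $\ell_\delta$ is an affine form on $\R^2$ vanishing on the line spanned by $\delta$; this piecewise-linear convex function bends exactly along $\delta$ and so induces precisely the single-diagonal subdivision. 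The fan lives in $\R^{n+3}$ modulo the $3$-dimensional space $\mathrm{Aff}$ of restrictions of affine forms, and two distinct facets of the bounded polytope $\Sigma(Q)$ are parallel if and only if their outer normal rays are opposite. Thus I must show that for distinct diagonals $\delta,\delta'$ there is no $\lambda>0$ with $\omega^\delta+\lambda\,\omega^{\delta'}\in\mathrm{Aff}$.

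The tool for this is the family of circuit (``bend'') functionals. Since $d=2$, every circuit of the vertex configuration is a convex quadrilateral $C$ on vertices $v_1,v_2,v_3,v_4$ in cyclic order, carrying the affine relation $\nu_1v_1+\nu_2v_2+\nu_3v_3+\nu_4v_4=0$ with $\sum_i\nu_i=0$ and alternating signs $(+,-,+,-)$; set $B_C(\omega)=\sum_i\nu_i\omega_i$. Any affine form $A$ satisfies $\sum_i\nu_iA(p_i)=A\bigl(\sum_i\nu_ip_i\bigr)=0$, so $\mathrm{Aff}$ lies in the kernel of every $B_C$; hence $\omega^\delta+\lambda\,\omega^{\delta'}\in\mathrm{Aff}$ would force $B_C(\omega^\delta)+\lambda\,B_C(\omega^{\delta'})=0$ for \emph{every} circuit $C$. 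Moreover the identity $\sum_i\nu_i\ell_\delta(p_i)=0$ yields the clean evaluation $B_C(\omega^\delta)=\sum_{i:\,\ell_\delta(p_i)>0}\nu_i\,\ell_\delta(p_i)$, whose sign is governed only by the contiguous cyclic arc of vertices of $C$ cut off on the positive side of the line $\ell_\delta$.

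The strategy is then to produce a single circuit on which the two ramps cannot cancel. At an endpoint $p_a$ of $\delta=\{p_a,p_c\}$ take the quadrilateral $C=(p_{a-1},p_a,p_{a+1},p_c)$, for which $\delta$ is a diagonal and, by convexity of $Q$, the neighbors $p_{a-1},p_{a+1}$ lie on opposite sides of $\ell_\delta$. A direct evaluation of the formula above gives $B_C(\omega^\delta)>0$ (the ramp genuinely folds at $\delta$), independently of the orientation chosen for $\ell_\delta$. The same computation shows that $B_C(\omega^{\delta'})<0$ holds precisely when $\omega^{\delta'}$ induces on $C$ the opposite diagonal $\{p_{a-1},p_{a+1}\}$; so whenever $\omega^{\delta'}$ does \emph{not} induce $\{p_{a-1},p_{a+1}\}$ on $C$ we get $B_C(\omega^\delta)+\lambda\,B_C(\omega^{\delta'})>0$ for all $\lambda>0$, contradicting the required cancellation and proving the two facets non-parallel.

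I expect the main obstacle to be exactly this sign control: for a badly placed $\delta'$ a given endpoint circuit may have $\omega^{\delta'}$ inducing its opposite diagonal, and I must instead select a suitable circuit among the candidates built at the two endpoints of $\delta$ (using the neighbors $p_{a\pm1}$ and $p_{c\pm1}$), or the analogous circuits around $\delta'$ with the roles of $\delta,\delta'$ exchanged. This is where the hypothesis $n\ge2$ is essential and must be used: with at least five vertices there is always enough room to avoid the bad configuration, whereas for $n=1$ the two diagonals of the square share the \emph{unique} circuit with opposite orientation, so $\omega^\delta+\omega^{\delta'}\in\mathrm{Aff}$ and the two facets are genuinely parallel. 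I would therefore finish with a short case analysis on the mutual position of $\delta$ and $\delta'$ (disjoint and non-crossing, crossing, or sharing an endpoint), verifying in each case that at least one of these circuits $C$ satisfies $B_C(\omega^{\delta'})\ge0$ while $B_C(\omega^{\delta})>0$.
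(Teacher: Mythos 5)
Your normal-fan framework is sound, and it is genuinely dual to the paper's argument (the paper works with vertex coordinates: differences of GKZ vectors across flips, finished by a matrix-rank computation, while you work with facet normals). In particular your reduction is correct: facets correspond to single-diagonal subdivisions, the outer normal ray of $F_\delta$ modulo the $3$-dimensional lineality space $\mathrm{Aff}$ is represented by the fold $\omega^\delta$, parallelism of $F_\delta,F_{\delta'}$ is equivalent to $\omega^\delta+\lambda\,\omega^{\delta'}\in\mathrm{Aff}$ for some $\lambda>0$, the circuit functionals $B_C$ annihilate $\mathrm{Aff}$, and your non-crossing case works (there $\omega^{\delta'}$ is linear on $[p_a,p_c]$, so $B_C(\omega^{\delta'})\ge0$ on the endpoint circuit). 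The gap is the deferred final step: the claim that for \emph{crossing} diagonals at least one endpoint circuit (at the endpoints of $\delta$, or of $\delta'$ with roles swapped) satisfies $B_C(\omega^\delta)>0$ and $B_C(\omega^{\delta'})\ge0$. This is false. Take the regular pentagon ($n=2$), $\delta=\{p_1,p_3\}$, $\delta'=\{p_2,p_5\}$. Your candidates are only three distinct quadrilaterals: $\{p_1,p_2,p_3,p_5\}$ (which arises both from the endpoint $p_1$ of $\delta$ and the endpoint $p_2$ of $\delta'$), $\{p_1,p_2,p_3,p_4\}$, and $\{p_1,p_2,p_4,p_5\}$. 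On the first, the two diagonals of the quadrilateral are exactly $\delta$ and $\delta'$, and each fold strictly induces its own diagonal, so $B_C(\omega^\delta)$ and $B_C(\omega^{\delta'})$ have strictly opposite signs. On $\{p_1,p_2,p_3,p_4\}$, $\omega^\delta$ induces $\delta$, while $\omega^{\delta'}$ has heights $(\ast,0,0,0)$ (positive only at $p_1$) and induces the opposite diagonal $\{p_2,p_4\}$: again strictly opposite signs. On $\{p_1,p_2,p_4,p_5\}$, $\omega^{\delta'}$ induces $\delta'$, while $\omega^\delta$ has heights $(0,0,h,h)$, a non-coplanar lift inducing the opposite diagonal $\{p_1,p_4\}$: opposite signs once more. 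So every candidate circuit is individually consistent with parallelism (each only pins down a candidate value of $\lambda$), and the proposed case analysis cannot close.

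The repair stays inside your framework but needs a different family of circuits: ones on which one fold is \emph{affine} (so its functional vanishes) while the other is not; $B_C(\omega^\delta)=0\neq B_C(\omega^{\delta'})$ also contradicts $B_C(\omega^\delta)+\lambda B_C(\omega^{\delta'})=0$. For crossing $\delta=\{p_a,p_c\}$, $\delta'$: one arc of $\delta$ has at least two vertices (this is where $n\ge2$ enters) and contains exactly one endpoint $p_{a'}$ of $\delta'$; take $C=\{p_a,p_c,p_{a'},q\}$ with $q\neq p_{a'}$ in that arc. Then all of $C$ lies weakly on one side of the line of $\delta$, so $B_C(\omega^\delta)=0$; and $B_C(\omega^{\delta'})\neq0$, since an affine function agreeing with $\omega^{\delta'}$ on $C$ would have to vanish (or agree with $\ell_{\delta'}$) at three non-collinear vertices and hence fail at the fourth, because $p_a,p_c$ lie strictly on opposite sides of the line of $\delta'$ and $p_{a'}$ lies on it. In the pentagon example these good circuits are exactly $\{p_1,p_3,p_4,p_5\}$ and $\{p_2,p_3,p_4,p_5\}$, which your family omits. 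One caveat for any variant: ``the line of $\delta'$ separates the four points'' does not by itself force $B_C(\omega^{\delta'})\neq0$, since a $2$--$2$ separation can still have a coplanar lift (e.g.\ a trapezoid with two sides parallel to the fold line); forcing an endpoint of $\delta'$ into $C$ avoids this degeneracy.
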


\begin{proof}
The polytope $\AssI(Q)$ satisfies the following correspondence: 
\[
\begin{array}{rcl}
 \text{vertices}  & \leftrightarrow & \text{triangulations of } Q \\
 \text{facets}  & \leftrightarrow & \text{diagonals of } Q  \\
\end{array}
\]
For a given diagonal $\delta$ of $Q$, we denote by $F_\delta$ the facet of $\AssI(Q)$ corresponding  to $\delta$, and we define the $(n-1)$-dimensional subspace $V_\delta \subset \R^{n+3}$ by
\[
V_\delta= \mathrm{span} \{ u-v : u,v \text{ are vertices of } F_\delta \}.
\]
Thus for diagonals $\delta$ and $\delta'$ of $Q$ we have  
\[
F_\delta \text{ is parallel to } F_{\delta'} \text{ if and only if } V_\delta=V_{\delta'}.
\]  
Given two different diagonals $\delta$ and $\delta'$ of $Q$ there are two cases:
\smallskip

\noindent
\emph{Case 1.}  $\delta$ and $\delta'$ do not cross, i.e they do not intersect in an interior point of $Q$. In this case, $F_\delta$ and $F_{\delta'}$ intersect on the face of $\AssI(Q)$ corresponding to the subdivision generated by the two non crossing diagonals $\{\delta,\delta'\}$, and so $F_\delta$ and $F_{\delta'}$ are not parallel.

\begin{figure}[ht] 
   \centering
   \includegraphics[width=0.85\textwidth]{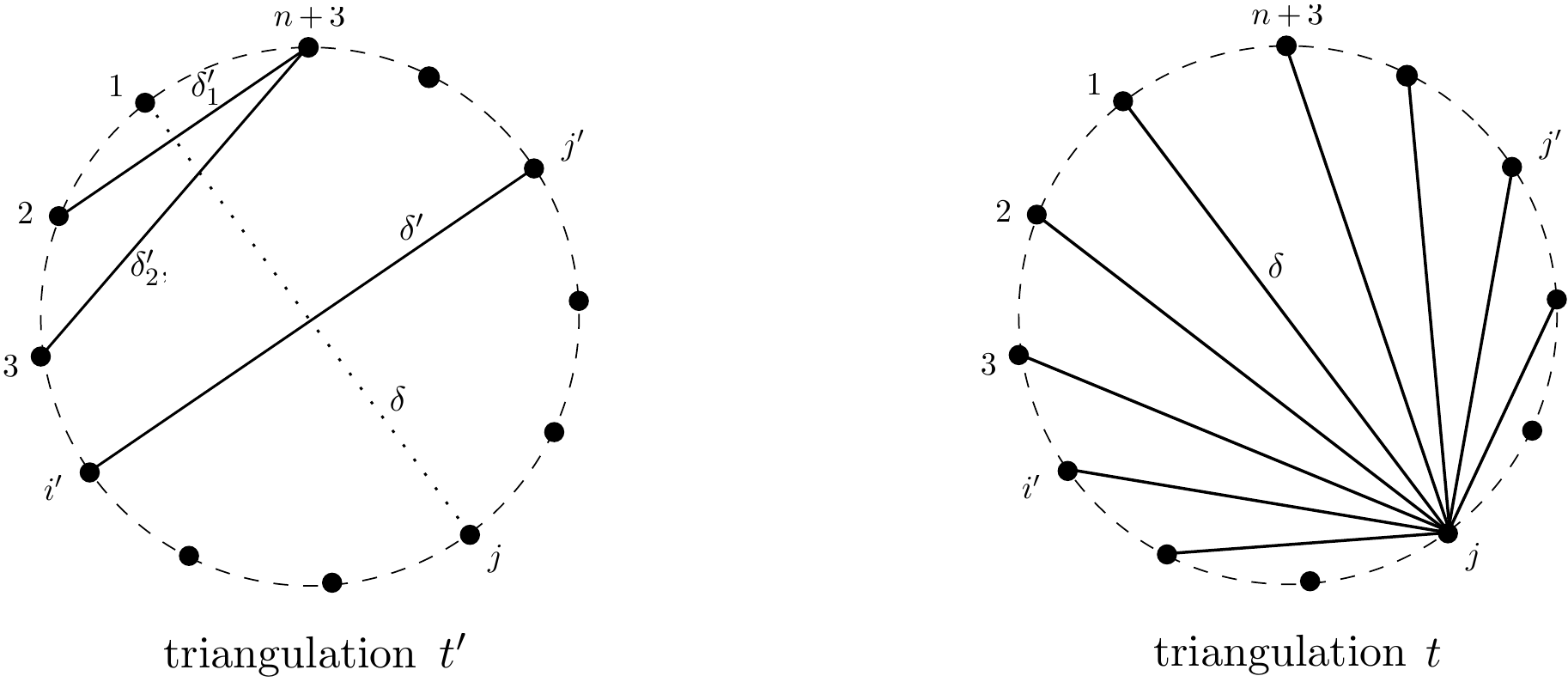} 
   \caption{Diagonals mentioned in Case 2.}
   \label{sec_parallel_proof}
\end{figure} 

\noindent
\emph{Case 2.} $\delta$ and $\delta'$ cross in an interior point of $Q$. In this case, using $n+3\geq 5$, we can relabel the vertices of $Q$ in either clockwise or counterclockwise direction so that $1\in \delta$, $2\notin \delta'$, so $\delta=\{1,j\}$ and $\delta'=\{i',j' \}$ with $3 \leq i' < j< j' \leq n+3$. Thus there is 
a triangulation $t'=(\delta_1',\delta_2',\dots , \delta_n')$ of $Q$ with $\delta_1'=\{2,n+3\}$, $\delta_2'=\{3,n+3\}$ and $\delta' \in \{\delta_2',\delta_3',\dots ,\delta_n'\}$ (see left hand side of Figure \ref{sec_parallel_proof}). 

Let $t_1'$ be the triangulation obtained by flipping the diagonal $\delta_1'$ of $t'$ (to $\{1,3\}$), and let 
\[
w=v(t_1')-v(t').
\]
Since the triangulations $t_1'$ and $t'$ contain the diagonal $\delta'$, the corresponding vertices $v(t_1')$ and $v(t')$ are both vertices of $F_{\delta'}$, which implies that 
\[
w\in V_{\delta'}.
\] 

On the other hand, we prove that $w\notin V_\delta$. For this, let $t$ be the triangulation of $Q$ determined by the set of non-crossing diagonals of the form $\{k,j\}$ with $k\in S$, where $S=\{k\in [n+3]: k\neq j-1,j,j+1\}$ (as shown in the right hand side of Figure \ref{sec_parallel_proof}), and let $t_{k,j}$ be the triangulation obtained by flipping the diagonal $\{k,j\}$ of $t$. We denote by $v(t)$ and $v(t_{k,j})$ the vertices of $\AssI(Q)$ corresponding to $t$ and $t_{k,j}$, respectively. Then we have 
\[
V_\delta= \mathrm{span} \{ u_k= v(t_{k,j})-v(t) : k\in S \text{ and } k\neq 1 \},
\]

Suppose that $w\in V_\delta$. Then $w$ can be written as a linear combination 
\begin{equation}\label{eq1}
w=\sum_{k\in S\backslash \{1\} } c_k u_k
\end{equation}
Definition \ref{def_GKZ} yields that each of the vectors $w$ and $u_k$ has exactly $4$
non-zero coordinates. More precisely, if $w^\ell$ is the $\ell$-coordinate of $w$ 
and $u_k^\ell$ is the $\ell$-coordinate of $u_k$, then 
\begin{itemize}
\item $w^\ell \neq 0$ for $\ell \in \{ 1,2,3,n+3 \}$ and $w^\ell=0$ otherwise.
\item $u_{n+3}^\ell \neq 0$ for $\ell \in \{ n+2,n+3,1,j \}$ and $u_{n+3}^\ell=0$ otherwise.
\item for $k\in S\backslash \{1,n+3\}$, $u_k^\ell \neq 0$ for $\ell \in \{ k-1,k,k+1,j \}$ and $u_k^\ell =0$ otherwise.
\end{itemize}  
We will respresent the system of linear equations (\ref{eq1}) in matrix form.
The columns of the matrix are the vectors 
$\{u_k\}_{k\in S\backslash \{1\}}$, 
$c=(c_k)_{k\in S\backslash \{1\}}$ is the vector of coefficients. 
The symbol~$*$ represents non-zero entries. 
\begin{center}
	\includegraphics[width=1\textwidth]{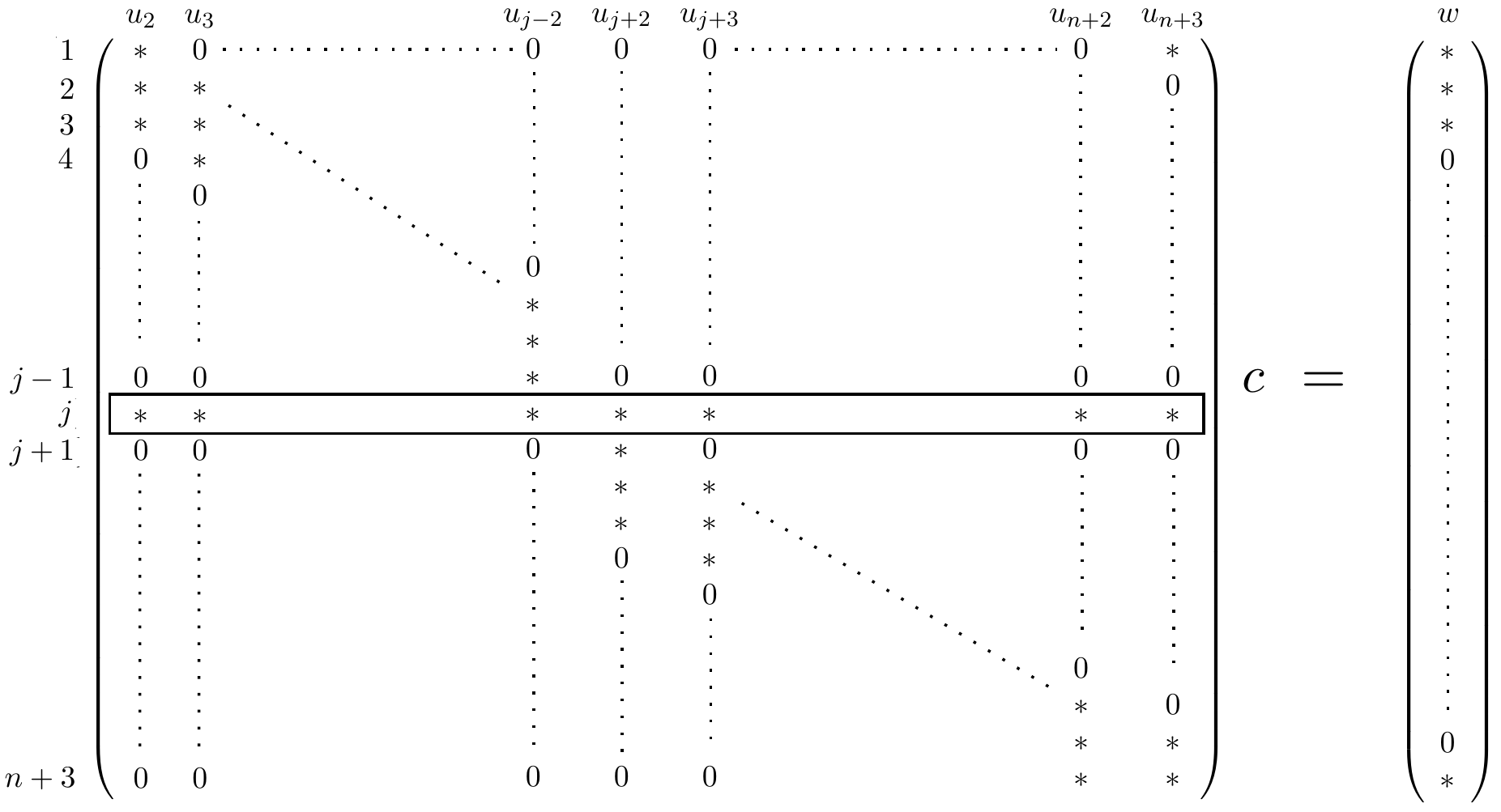}
\end{center}
One can easily see that the coefficients $c_k$ for $j+2 \leq k \leq n+3$ must be equal to~$0$,
and similarly for $j-2\ge k\ge 3$. Thus $w=c_2u_2$, which yields a contradiction.  
  
Thus we know that $w\in V_{\delta'}$ but $w\notin V_\delta$, so $V_\delta \neq V_{\delta'}$,
 and hence $F_\delta$ and $F_{\delta'}$ are not parallel.
\end{proof}

\subsection{Construction II: The associahedron associated to a cluster complex}
\label{section_clustercomplex}

Cluster complexes are combinatorial objects that arose in the theory of cluster algebras
\cite{FZ02} \cite{FoZe03}. They correspond to the normal fans 
polytopes known as generalized associahedra. In this section we introduce the particular case of type $A_n$, which is related to the classical associahedron. Most of this theory was introduced by S. Fomin and A. Zelevinsky and can be found in \cite{FZ03}, \cite{FR07} and \cite{CFZ02}.   

\subsubsection{The cluster complex of type $A_n$}
The \emph{root system of type} $A_n$ is the set $\Phi := \Phi (A_n) = \{ e_i-e_j, \ 1\leq i \neq j \leq n+1 \}$. The \emph{simple roots} of type $A_n$ are the elements of the set $\Pi =\{ \alpha_i=e_i-e_{i+1}, i\in [n] \}$, the set of \emph{positive roots} is 
$\Phi_{>0}= \{e_i-e_j:i<j\}$, and the set of \emph{almost positive roots} is $\Phi_{\geq -1}:=\Phi_{>0}\cup -\Pi$.

There is a natural correspondence between the set $\Phi_{\geq-1}$ and diagonals of the $(n+3)$-gon $P_{n+3}$: We identify the negative simple roots $-\alpha_i$
\begin{figure}[ht]
	\centering
	\includegraphics[width=0.3\textwidth]{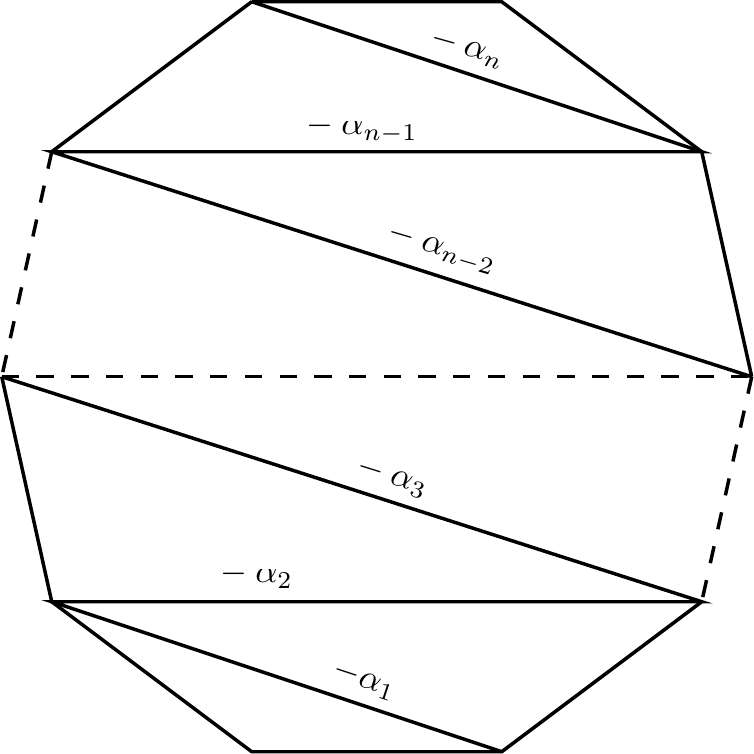}
	\caption{Snake and negative roots of type $A_n$.}
	\label{snake}
\end{figure}
with the diagonals on the snake of $P_{n+3}$ as illustrated in Figure \ref{snake}. Each positive root is a consecutive combination 
\[
\alpha_{ij}=\alpha_i+\alpha_{i+1}+\cdots + \alpha_j, \hspace{1cm} 1\leq i \leq j \leq n,
\]
and thus is identified with the unique diagonal of $P_{n+3}$ crossing the (consecutive) diagonals that correspond to $-\alpha_i, -\alpha_{i+1}, \cdots, -\alpha_{j}$.

\begin{definition}[Cluster complex of type $A_n$]
Two roots $\alpha$ and $\beta$ in $\Phi_{\geq -1}$ are \textit{compatible} if 
their corresponding diagonals do not cross. The \emph{cluster complex} $\Delta (\Phi)$ of type $A_n$ is the clique complex of the compatibility relation on $\Phi_{\geq -1}$, i.e., the complex whose simplices correspond to the sets of almost positive roots that are pairwise compatible. Maximal simplices of $\Delta (\Phi)$ are called \emph{clusters}.
\end{definition}

In this case, the cluster complex satisfies the following correspondence,
which is dual to the complex of the classical associahedron $\mathrm{Ass}_n$:
\[
\begin{array}{rcl}
 \text{vertices} & \leftrightarrow & \text{diagonals of a convex } (n+3) \text{-gon} \\
 \text{simplices} & \leftrightarrow & \text{polyhedral subdivisions of the } (n+3) \text{-gon}     \\
 	& & \text{(viewed as collections of non-crossing diagonals)} \\
 \text{maximal simplices} & \leftrightarrow & \text{triangulations of the } (n+3) \text{-gon}    \\
	& & \text{(viewed as collections of } n \text{ non-crossing diagonals)}
\end{array}
\]

\noindent
The following proposition is the particular case of type $A_n$ of \cite[Thm.~1.10]{FZ03}. 
It allows us to think of the cluster complex as the complex of a complete simplicial fan.  

\begin{proposition}\label{th_simpfan}
The simplicial cones $\R_{\geq 0} C$ generated by all clusters $C$ of type $A_n$ 
form a complete simplicial fan in the ambient space 
$\{(x_1,\dots ,x_{n+1})\in \R^{n+1}: x_1+\cdots +x_{n+1}=0 \}$.
\end{proposition}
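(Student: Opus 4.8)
The plan is to verify the three defining properties of a complete simplicial fan in turn: (i) each cluster spans a full-dimensional \emph{simplicial} cone, i.e.\ is a linear basis of the $n$-dimensional space $H=\{x:x_1+\cdots+x_{n+1}=0\}$; (ii) any two of these cones meet in a common face; and (iii) their union is all of $H$. Throughout I would work in the explicit model furnished by the correspondence of the previous subsection: in the basis $\Pi=\{\alpha_1,\dots,\alpha_n\}$ of $H$, a negative simple root $-\alpha_i$ is the vector $-e_i$, while a positive root $\alpha_{ij}=\alpha_i+\cdots+\alpha_j$ is the indicator vector of the interval $\{i,\dots,j\}$, recording exactly which snake-diagonals the corresponding diagonal of $P_{n+3}$ crosses. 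Under this dictionary a cluster is a triangulation of $P_{n+3}$, a ridge (codimension-one face) is a set of $n-1$ non-crossing diagonals, and two clusters are adjacent across a ridge precisely when they differ by a single diagonal flip.

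For step (i), simpliciality, I would show that the $n$ root vectors attached to a triangulation $T$ of $P_{n+3}$ are linearly independent, by a routine induction on $n$: every triangulation has an \emph{ear}, and deleting the cut-off vertex reduces $T$ to a triangulation of an $(n+2)$-gon, i.e.\ to an instance of type $A_{n-1}$, the base case being immediate. This identifies every cluster with a basis of $H$, so each cone $\R_{\geq 0}C$ is a full-dimensional simplicial cone.

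For step (ii), the local fan condition, I would first observe that the cluster complex is a pseudomanifold: a ridge, being a family of $n-1$ non-crossing diagonals, subdivides $P_{n+3}$ into $n+1$ cells exactly one of which is a quadrilateral $Q_0$, and the two diagonals of $Q_0$ are the only two ways to complete the ridge to a triangulation; moreover the flip graph of triangulations is connected, so the complex is strongly connected. It then remains to check the crucial \emph{opposite-sides} condition: if $W$ is the ridge, $\mathrm{span}(W)$ the hyperplane it spans, and $\alpha,\beta\in\Phi_{\geq-1}$ the two roots corresponding to the diagonals of $Q_0$, then $\alpha$ and $\beta$ lie on strictly opposite sides of $\mathrm{span}(W)$. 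This follows from a Ptolemy-type relation $\alpha+\beta\in\mathrm{span}(W)$: reading off crossing numbers with the snake-diagonals in the interval model, each coordinate of $\alpha+\beta$ coincides with the contribution of one of the two pairs of opposite sides of $Q_0$, and all four sides of $Q_0$ are either boundary edges of $P_{n+3}$ (the zero vector) or diagonals belonging to $W$. Hence the unique linear dependence among $W\cup\{\alpha,\beta\}$ has coefficient $+1$ on both $\alpha$ and $\beta$, so $\alpha\equiv-\beta$ modulo $\mathrm{span}(W)$ and the two adjacent cones sit on opposite sides of their common wall; together with step (i) this forces any two cones sharing a ridge to meet exactly in it.

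Step (iii), completeness, is where the real work lies. The local data of steps (i)--(ii) do not by themselves prevent the cones from \emph{wrapping around} and covering $H$ more than once (already in dimension two, finitely many sectors can cover the plane twice while satisfying every local condition). To finish I would use that the cluster complex of type $A_n$ is a simplicial $(n-1)$-sphere, and that normalizing each root to a unit vector defines a simplicial map from this sphere to the unit sphere of $H$ which, by the non-degeneracy of step (i) and the opposite-sides condition of step (ii), is a local homeomorphism. A local homeomorphism from a compact connected $(n-1)$-sphere to $S^{n-1}$ is a covering map, and since $S^{n-1}$ is simply connected for $n\geq 3$ it must be a homeomorphism, so the cones tile $H$ exactly once and form a complete simplicial fan; the cases $n=1,2$ I would check by hand (for $n=2$ the five rays $\pm e_1,\pm e_2,e_1+e_2$ visibly tile the plane). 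The main obstacle is precisely this global step: promoting the purely local opposite-sides condition to a genuine tiling, for which identifying the complex as a sphere and running the covering-degree argument, including the routine but slightly delicate check that the map is a local homeomorphism along all lower-dimensional faces and not merely across ridges, is the essential ingredient.
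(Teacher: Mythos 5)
Note first that the paper offers no proof of Proposition~\ref{th_simpfan} for you to be measured against: it quotes the result as the type-$A_n$ case of \cite[Thm.~1.10]{FZ03}, where it is proved by a quite different method (uniqueness of ``cluster expansions'', established via piecewise-linear involutions $\tau_\pm$). Judged on its own, your outline has the right skeleton, and you are right that the local conditions must be supplemented by a genuinely global argument. Steps (i) and (ii) are sound modulo small repairs. For (i), the ear induction is awkward because deleting a vertex does not interact well with the fixed snake; it is cleaner to note that a coordinate indexed by a snake diagonal belonging to the cluster is nonzero in that root only, and that the crossing sets of the remaining diagonals form a laminar family of intervals, whence independence. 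In (ii) there is a gap as written: knowing that \emph{each coordinate} of $\alpha+\beta$ agrees with the contribution of \emph{one} of the two pairs of opposite sides of $Q_0$ does not place $\alpha+\beta$ in $\mathrm{span}(W)$, because the relevant pair could change from coordinate to coordinate. You need a single pair that works for all coordinates simultaneously; this holds because two snake diagonals traversing $Q_0$ through complementary pairs of opposite sides would have to cross each other, contradicting that the snake is a triangulation. (Also, $n-1$ noncrossing diagonals cut the $(n+3)$-gon into $n$ cells, not $n+1$; and the sphericity of the cluster complex that you invoke is itself a nontrivial classical input, cf.~\cite{Lee89}.)

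The genuine gap is in step (iii), exactly at the point you call ``routine but slightly delicate''. Non-degeneracy on facets together with the opposite-sides condition across every ridge does \emph{not} imply that the map $|\Delta(\Phi)|\to S^{n-1}$ is a local homeomorphism near faces of codimension $\geq 2$, even when the domain is a simplicial sphere whose vertices map to pairwise distinct rays. Concretely, in $\R^3$ take rays $u_0,\dots,u_7$ at longitudes $0^\circ,90^\circ,\dots,630^\circ$ (reduced mod $360^\circ$), with latitude $+10^\circ$ for $i\leq 3$ and $-10^\circ$ for $i\geq 4$, together with the north and south poles $N,S$: the $16$ cones spanned by $\{N,u_i,u_{i+1}\}$ and $\{S,u_i,u_{i+1}\}$ (indices mod $8$) are simplicial, any two sharing a wall lie on opposite sides of it, and the underlying complex is a simplicial $2$-sphere, yet the induced map onto $S^2$ is a branched double cover: a local homeomorphism along every ridge but not at $N$ or $S$. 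So your covering-space argument cannot be launched from (i) and (ii) alone; excluding such wrapping is additional mathematical content, not a routine check. The standard repair is an induction on $n$: the link of a face $\tau$ of the cluster complex is a join of cluster complexes of smaller type $A$, one factor for each cell of the partial subdivision of the polygon corresponding to $\tau$, and one must verify that the projections to $H/\mathrm{span}(\tau)$ of the roots compatible with $\tau$ form, up to linear isomorphism, the root configuration of that smaller system; the inductive hypothesis then yields the local homeomorphism at $\tau$, after which your degree argument closes the proof for $n\geq 3$. That identification of projected roots with smaller cluster fans is the real heart of the proposition, and it is precisely the step your proposal leaves out.
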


\begin{proposition}\label{theorem_cluster}
The simplicial fan in {\normalfont Proposition~\ref{th_simpfan}} is the normal fan of a simple $n$-dimensional polytope $P$.
\end{proposition}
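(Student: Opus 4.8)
The plan is to realize $P$ explicitly by writing down one facet inequality per ray of the fan and then checking that the resulting polytope has the prescribed normal fan; this is the route taken by Chapoton, Fomin and Zelevinsky. The rays of the cluster fan are indexed by the almost positive roots $\Phi_{\geq -1}$ (equivalently, by the diagonals of $P_{n+3}$); let $u_\alpha$ be the generator of the ray through $\alpha$ inside the hyperplane $H=\{x : x_1+\cdots+x_{n+1}=0\}$. I would look for constants $c_\alpha$ so that
\[
P \ = \ \{\, x \in H : \langle u_\alpha, x\rangle \le c_\alpha \ \text{ for all } \alpha \in \Phi_{\geq -1} \,\}
\]
is a bounded $n$-polytope whose normal fan is exactly the fan of Proposition~\ref{th_simpfan}, the $c_\alpha$ being supplied by their compatibility-degree construction.

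First I would fix the combinatorial skeleton to be matched: by the correspondence recorded above, the maximal cones of the fan are the cones $\R_{\ge 0}C$ spanned by clusters $C$, and two clusters are adjacent across a wall exactly when the two triangulations differ by a single flip. The goal is thus a polytope with one vertex $v_C$ per cluster, whose normal cone at $v_C$ is $\R_{\ge 0}C$.

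Next, for each cluster $C$ I would solve the $n\times n$ linear system $\langle u_\alpha, x\rangle = c_\alpha$, $\alpha\in C$ (nonsingular, since the fan is simplicial, so the $u_\alpha$ with $\alpha\in C$ form a basis of $H$) to obtain a candidate vertex $v_C$. The decisive verification is \emph{feasibility}: $v_C$ must satisfy $\langle u_\beta, v_C\rangle \le c_\beta$ for every $\beta\notin C$, with equality only on the facets through $v_C$. The normals $\{u_\alpha\}$ positively span $H$ because the fan is complete, so $P$ is automatically bounded; hence once every $v_C$ is feasible and each inequality is facet-defining, the $v_C$ account for all vertices of $P$, the normal cone at $v_C$ is $\R_{\ge 0}C$, and the normal fan of $P$ coincides with the cluster fan. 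Simplicity of $P$ then follows because exactly $n$ facets meet at each $v_C$.

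The main obstacle is precisely this global feasibility step: it is not a formal consequence of completeness of the fan, but depends on the specific geometry of the roots. Equivalently, one can recast the whole question through a strictly convex support function: assign heights $h_\alpha=c_\alpha$, let $f$ be piecewise-linear, linear on each maximal cone with $f(u_\alpha)=h_\alpha$, and observe that $P$ has the desired normal fan if and only if $f$ is strictly convex, i.e. across every wall the inequality coming from the unique linear dependence among the $n+1$ generators of the two adjacent cones holds strictly. The heart of the proof is to choose the $h_\alpha$ so that all these flip inequalities hold simultaneously; here the connectedness of the flip graph together with the explicit exchange relations among the roots is what forces the system to be consistent.
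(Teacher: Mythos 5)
The first thing to note is that the paper does not actually prove this proposition: it records that the statement was conjectured by Fomin--Zelevinsky and proved by Chapoton, Fomin, and Zelevinsky, pointing to \cite[Cor.~1.9]{CFZ02} for the explicit description of the polytope by inequalities. Your proposal is a faithful reconstruction of the architecture of that cited proof, and the framework you set up is correct: the reduction of polytopality of a complete simplicial fan to the existence of a strictly convex piecewise-linear support function, the identification of walls with flips (exchange relations), the strict ``flip inequality'' across each wall coming from the unique linear dependence among the $n+1$ generators of two adjacent cones, boundedness of $P$ from completeness of the fan, and simplicity of $P$ from simpliciality of the fan.

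The genuine gap is that the one step carrying all the mathematical content is never executed: you neither exhibit the heights $h_\alpha$ nor verify a single flip inequality, deferring both to ``their compatibility-degree construction.'' That existence statement \emph{is} the theorem; everything else in your write-up is the standard fan--polytope dictionary. Moreover, the mechanism you offer in its place is not correct: connectedness of the flip graph cannot ``force the system to be consistent,'' because the dual graph of \emph{any} complete simplicial fan is connected, yet there exist complete simplicial fans that are not the normal fan of any polytope. So no formal property of this kind can substitute for the actual verification. What Chapoton, Fomin, and Zelevinsky do is write down a concrete height vector and check, using the explicit form of the exchange relations among almost positive roots, that every wall inequality holds strictly; any completion of your proposal would have to carry out that computation (or an equivalent one), rather than assert that consistency is forced.
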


Proposition \ref{theorem_cluster} was first conjectured by Fomin and Zelevinsky
\cite[Conj.~1.12]{FZ03} and proved by Chapoton, Fomin, and Zelevinsky \cite{CFZ02}. 
For an explicit description of such a polytope by inequalities see \cite[Cor.~1.9]{CFZ02}. 

\subsubsection{The associahedron $\AssII(A_n)$}

\begin{definition}
$\AssII(A_n)$ is any polytope whose normal fan is the fan with maximal cones $\R_{\geq 0} C$ generated by all clusters $C$ of type $A_n$.
\end{definition}

\begin{proposition}
$\AssII(A_n)$ is an $n$-dimensional associahedron.
\end{proposition}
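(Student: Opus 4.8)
The plan is to read off the combinatorics of $\AssII(A_n)$ from its normal fan. By Proposition~\ref{theorem_cluster} the polytope $\AssII(A_n)$ is a simple polytope whose normal fan is the cluster fan of Proposition~\ref{th_simpfan}; since that fan lives in the $n$-dimensional ambient space $\{x\in\R^{n+1}:x_1+\cdots+x_{n+1}=0\}$, the polytope $\AssII(A_n)$ is $n$-dimensional. The general fact I would invoke is that for any polytope $P$ the face lattice of $P$ is anti-isomorphic to the poset of cones of its normal fan $\mathcal{N}(P)$, a nonempty face $F$ of dimension $d$ corresponding to the cone of dimension $n-d$, with face inclusion reversing cone inclusion. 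In particular the vertices of $\AssII(A_n)$ correspond to the maximal ($n$-dimensional) cones and its facets to the rays of the cluster fan.

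Next I would identify the cone poset of the cluster fan with the face poset of the cluster complex $\Delta(\Phi)$. Each cone of the fan is $\R_{\geq 0}C$ for a unique simplex $C$ of $\Delta(\Phi)$ (with the origin corresponding to the empty simplex), and this bijection is inclusion-preserving: one cone is contained in another exactly when the corresponding set of almost positive roots is contained in the other. Thus the poset of cones is isomorphic to $\Delta(\Phi)$ ordered by inclusion.

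Finally I would feed in the correspondence recorded above, under which the simplices of $\Delta(\Phi)$ are the collections of pairwise non-crossing diagonals, i.e.\ the polyhedral subdivisions of the $(n+3)$-gon, with inclusion of simplices corresponding to adding diagonals, that is, to refinement. Composing the three steps, the face lattice of $\AssII(A_n)$ is anti-isomorphic to the poset of subdivisions ordered by inclusion of their diagonal sets, equivalently isomorphic to the lattice of subdivisions ordered by refinement, with the trivial subdivision (no diagonals) matching the whole polytope and the triangulations matching the vertices. By the Definition of the associahedron this shows that $\AssII(A_n)$ is an $n$-dimensional associahedron.

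The step I expect to require the most care is the bookkeeping of the two order reversals and of the extreme elements of the two lattices: the passage from $P$ to $\mathcal{N}(P)$ reverses order while the passage from cones to subdivisions preserves it, and one must verify that the net orientation is precisely the refinement order of the Definition (finer subdivisions sitting below coarser ones), as well as that the formal top and bottom — the whole polytope versus the empty subdivision, and the empty face — are matched correctly.
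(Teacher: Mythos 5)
Your proof is correct and is essentially the argument the paper leaves implicit: the paper states this proposition without proof, treating it as an immediate consequence of Propositions~\ref{th_simpfan} and~\ref{theorem_cluster} together with the stated correspondence between simplices of the cluster complex and collections of non-crossing diagonals (i.e.\ subdivisions) of the $(n+3)$-gon. Your explicit bookkeeping of the two order reversals (the anti-isomorphism between the face lattice of $\AssII(A_n)$ and the cone poset of its normal fan, composed with the inclusion-to-refinement translation) is exactly the standard dualization the paper takes for granted.
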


A polytopal realization of the associahedron $\mathrm{Ass}_2^{II}(A_2)$ is illustrated in
Figure \ref{fig_assA2}; note how the facet normals correspond to the almost positive roots 
of~$A_2$.

\begin{figure}[ht]
	\centering
	\includegraphics[width=0.4\textwidth]{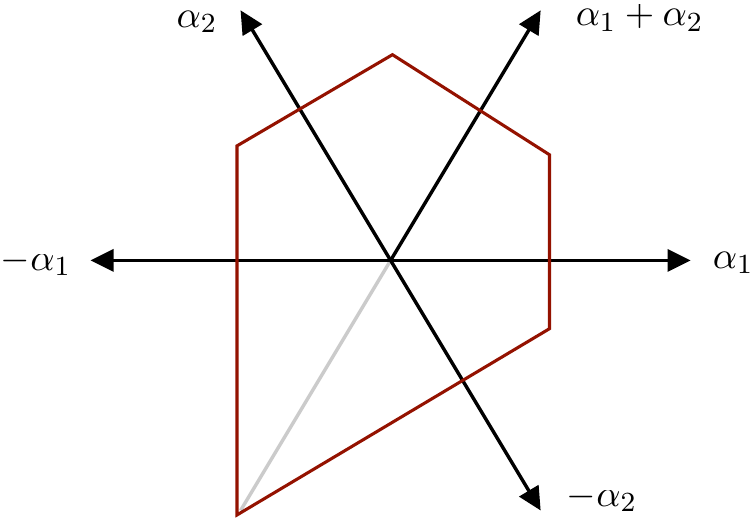}
	\caption{The simplicial fan of the cluster complex of type $A_2$ and the associahedron $\mathrm{Ass}(A_2)$}
	\label{fig_assA2}
\end{figure}

\begin{theorem}\label{theorem_Ass^II_parallel}
$\AssII(A_n)$ has exactly $n$ pairs of parallel facets. These correspond to the pairs of roots $\{ \alpha_i,-\alpha_i \}$, for $i=1,\cdots , n$, or equivalently, to the pairs of diagonals $\{ \alpha_i,-\alpha_i \}$ as indicated in {\normalfont Figure~\ref{Parallel_ass^II}}.
\end{theorem}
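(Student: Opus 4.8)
The plan is to exploit the fact that $\AssII(A_n)$ is defined through its normal fan, so that the directions of its facet normals are completely pinned down. Recall that the facets of a polytope are in bijection with the rays (one-dimensional cones) of its normal fan, and the outward normal direction of each facet is exactly the corresponding ray. By Proposition~\ref{th_simpfan} the rays of this fan are the half-lines $\R_{\geq 0}\,\alpha$ for $\alpha\in\Phi_{\geq -1}$, so the facets of $\AssII(A_n)$ are indexed by the almost positive roots, and the facet $F_\alpha$ has outward normal pointing in the direction of $\alpha$. (As a sanity check, the number $\tfrac{n(n+3)}{2}$ of almost positive roots equals the number of diagonals of the $(n+3)$-gon, hence the number of facets, so the indexing is a genuine bijection.)

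The key geometric observation I would make is that two distinct facets $F_\alpha$ and $F_\beta$ lie in parallel hyperplanes precisely when their outward normals are parallel as vectors. Since two distinct facets of a convex polytope can never share the same outward normal direction (the supporting hyperplane maximizing a given linear functional is unique, so equal normals would force $F_\alpha=F_\beta$), parallelism forces the normals to point in \emph{opposite} directions. Thus $F_\alpha$ is parallel to $F_\beta$ if and only if $\beta=-\lambda\alpha$ for some $\lambda>0$, i.e.\ if and only if the rays $\R_{\geq 0}\,\alpha$ and $\R_{\geq 0}\,\beta$ are opposite. This reduces the whole statement to a finite combinatorial question about the root system: for which pairs $\alpha,\beta\in\Phi_{\geq -1}$ is $\beta$ a negative multiple of $\alpha$?

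Because $A_n$ is simply laced, all roots have the same length, so $\beta=-\lambda\alpha$ with $\lambda>0$ forces $\lambda=1$ and hence $\beta=-\alpha$. It then suffices to determine for which $\alpha\in\Phi_{\geq -1}=\Phi_{>0}\cup(-\Pi)$ the vector $-\alpha$ again lies in $\Phi_{\geq -1}$, which I would settle by a short case analysis in the coordinates $e_i-e_j$. If $\alpha=\alpha_{ij}$ is a positive root with $i\le j$, then $-\alpha$ is a negative root, and a negative root belongs to $\Phi_{\geq -1}$ only when it is a negative simple root, i.e.\ exactly when $i=j$ and $\alpha=\alpha_i$; conversely the negatives of the negative simple roots $-\alpha_i$ are the simple roots $\alpha_i\in\Phi_{>0}$. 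Hence the antiparallel pairs are precisely $\{\alpha_i,-\alpha_i\}$ for $i=1,\dots,n$, giving exactly $n$ pairs. Translating back through the dictionary between almost positive roots and diagonals identifies these with the crossing pairs shown in Figure~\ref{Parallel_ass^II}, where $-\alpha_i$ is a snake diagonal and $\alpha_i$ is the unique diagonal crossing it.

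The one point I expect to require genuine care is the geometric reduction in the second paragraph: making precise that the normal-fan description fixes facet normal directions exactly, and that parallelism of \emph{distinct} facets is equivalent to opposite (not merely collinear) normal rays. Once this is in place, the remaining root-system computation is routine and finite, and the count of $n$ pairs follows immediately.
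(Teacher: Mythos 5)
Your proof is correct, and it is essentially the argument the paper intends: the paper in fact states Theorem~\ref{theorem_Ass^II_parallel} without any proof, leaving it as an immediate consequence of the normal-fan definition of $\AssII(A_n)$. Your write-up supplies exactly that implicit reasoning --- facets correspond to the rays $\R_{\geq 0}\,\alpha$, $\alpha\in\Phi_{\geq -1}$, of the cluster fan; distinct parallel facets must have antiparallel (not merely collinear) normals, and equal root lengths in the simply-laced system $A_n$ then force $\beta=-\alpha$; and the only pairs $\{\alpha,-\alpha\}$ inside $\Phi_{\geq -1}=\Phi_{>0}\cup(-\Pi)$ are $\{\alpha_i,-\alpha_i\}$ for $i=1,\dots,n$ --- so it correctly fills the gap the paper leaves open.
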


\begin{figure}[ht] 
	\centering
	\includegraphics[width=0.85\textwidth]{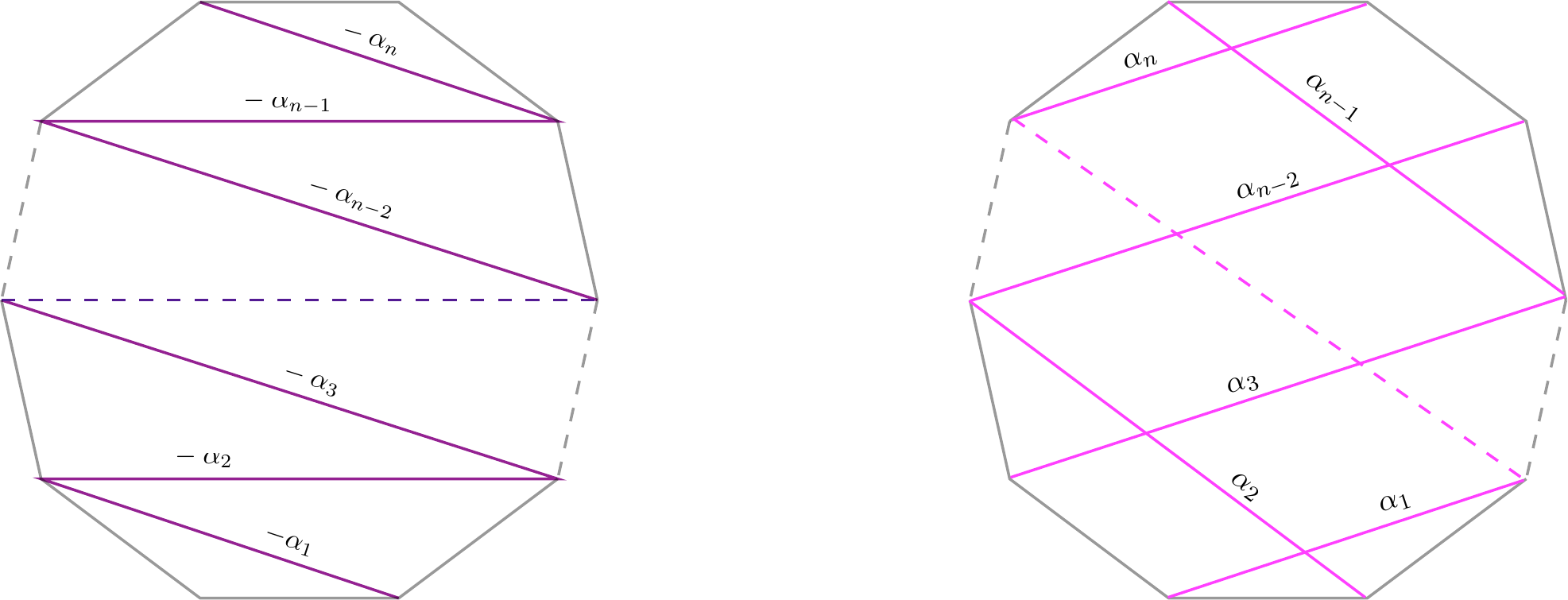}
	\caption{The diagonals of the $(n+3)$-gon that correspond to the pairs of parallel facets of $\AssII(A_n)$.}
	\label{Parallel_ass^II}
\end{figure}

\subsection{Construction III: The associahedron as a Minkowski sum of simplices}
\label{section_minkowski_sums}

The realization of the associahedron as a Minkowski sum of simplices  generalizes Loday's realization introduced in \cite{Lo04}, and is a special case of generalized permutahedra, 
an important family of polytopes studied by Postnikov \cite{Po05}. 
Loday's realization is also equivalent to the associahedra described recently by Buchstaber~\cite{Bu08}. 

\begin{definition}
For any vector $\mathbf{a}=\{\mathrm{a}_{ij}>0: 1\leq i\leq j \leq n+1\}$ of positive parameters let
\[
\AssIII(\mathbf{a}):=\sum_{1\leq i\leq j\leq n+1}\mathrm{a}_{ij}\Delta_{[i,\dots ,j]},
\]
where $\Delta_{[i,\dots ,j]}$ denotes the simplex conv$\{e_i,e_{i+1},\dots , e_j\}$ in $\R^{n+1}$. 
\end{definition}

\begin{proposition}[Postnikov {\cite[\S 8.2]{Po05}}] 
$\AssIII(\mathbf{a})$ is an $n$-dimensional associahedron. In particular, 
for $a_{ij}\equiv1$ this yields the 
realization of Loday~\cite{Lo04}.
\end{proposition}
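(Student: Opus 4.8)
The plan is to show that the \emph{combinatorial type} of $\AssIII(\mathbf{a})$ does not depend on the particular positive parameters $\mathrm{a}_{ij}$, so that the whole statement reduces to the single case $\mathrm{a}_{ij}\equiv 1$. The conceptual point is that an associahedron is defined purely combinatorially, by its face lattice; hence it suffices to exhibit one choice of parameters for which $\AssIII(\mathbf{a})$ is known to be an associahedron, provided that all positive choices yield combinatorially equivalent polytopes.

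First I would invoke two standard facts about normal fans. For polytopes $P_1,\dots,P_m$, the normal fan of a Minkowski sum is the common refinement (denoted $\wedge$) of the normal fans of the summands, $\mathcal{N}(P_1+\cdots+P_m)=\mathcal{N}(P_1)\wedge\cdots\wedge\mathcal{N}(P_m)$; and for any $\lambda>0$ one has $\mathcal{N}(\lambda P)=\mathcal{N}(P)$, since rescaling does not change which face is extremal in a given direction. Applying both to $\AssIII(\mathbf{a})=\sum_{i\le j}\mathrm{a}_{ij}\Delta_{[i,\dots ,j]}$ gives
\[
\mathcal{N}\big(\AssIII(\mathbf{a})\big)=\bigwedge_{1\le i\le j\le n+1}\mathcal{N}\big(\Delta_{[i,\dots ,j]}\big),
\]
which is manifestly independent of the positive weights $\mathrm{a}_{ij}$. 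Hence for any two parameter vectors $\mathbf{a},\mathbf{a}'$ the polytopes $\AssIII(\mathbf{a})$ and $\AssIII(\mathbf{a}')$ share a normal fan and are therefore combinatorially (indeed normally) equivalent. For the dimension I would note that the segment summands $\Delta_{[i,\dots ,i+1]}$ contribute the edge directions $e_{i+1}-e_i$ for $i=1,\dots ,n$, which are linearly independent and span the hyperplane $\{x:\sum_k x_k=0\}$; since every summand lies in a translate of this hyperplane, $\AssIII(\mathbf{a})$ is exactly $n$-dimensional.

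It then remains to treat the base case $\mathrm{a}_{ij}\equiv 1$. Here $\sum_{1\le i\le j\le n+1}\Delta_{[i,\dots ,j]}$ is precisely Loday's polytope, whose vertices (under the standard identification with planar binary trees, equivalently triangulations of the $(n+3)$-gon) and whose face lattice realize the associahedron; this is Loday's theorem \cite{Lo04}, recovered inside Postnikov's Minkowski-sum framework \cite[\S 8.2]{Po05}. Combining this with the parameter-independence established above shows that $\AssIII(\mathbf{a})$ is an $n$-dimensional associahedron for every positive $\mathbf{a}$, and that the special instance $\mathrm{a}_{ij}\equiv 1$ is Loday's realization.

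I expect the genuinely substantial step to be the base case: verifying that the face lattice of the Minkowski sum of all interval simplices matches the lattice of polyhedral subdivisions of the $(n+3)$-gon. The normal-fan reduction is essentially formal, but pinning down the combinatorics of the summed polytope — identifying its vertices with triangulations and, more generally, its faces with subdivisions — is where the real content lies. In this paper that content is supplied independently by the explicit face--subdivision correspondence established later in this section, which one could use in place of the appeal to Loday to obtain a fully self-contained argument.
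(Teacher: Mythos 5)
Your argument is correct, but it takes a genuinely different route from the paper's treatment. The paper offers no proof of this proposition at all: it is quoted from Postnikov, and the content the paper itself contributes is Theorem~\ref{theorem_ass^III_correspondece}, which, for arbitrary positive $\mathbf{a}$, describes an explicit order-preserving bijection $F_w \mapsto S_w$ between faces of $\AssIII(\mathbf{a})$ and polyhedral subdivisions of the $(n+3)$-gon (via the polygons $H_k$), and thereby yields the associahedron property uniformly in the parameters. You instead factor the statement into (i) a reduction showing that the normal fan of $\AssIII(\mathbf{a})$, being the common refinement $\bigwedge_{i\le j}\mathcal{N}\bigl(\Delta_{[i,\dots,j]}\bigr)$ of the normal fans of the summands, does not depend on the positive weights --- this, and your dimension count using the segments $\Delta_{[i,\dots,i+1]}$, are standard and correct --- and (ii) the base case $\mathbf{a}\equiv 1$, which you dispose of by citing Loday's theorem and Postnikov's identification of Loday's polytope with the unit Minkowski sum. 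The trade-off is this: your normal-fan reduction cleanly isolates where the parameters matter (nowhere, combinatorially) and makes the proposition follow from one classical instance, but the substantive combinatorial step is still outsourced to a citation, so your proof is no more self-contained than the paper's; conversely, the paper's explicit $w$-to-subdivision dictionary is more informative, since it is precisely what is used afterwards in the proof of Theorem~\ref{theorem_parallel_ass^III} to identify which diagonals give parallel facets, something your normal-fan argument alone does not provide. Your closing remark --- that the correspondence theorem could replace the appeal to Loday to obtain a self-contained argument --- is exactly right, and is in effect how the paper (and its expanded version with Santos) organizes the material.
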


In this section we present two main results. Theorem \ref{theorem_ass^III_correspondece} explains a correspondence between faces of $\AssIII(\mathbf{a})$ and subdivisions of a convex $(n+3)$-gon, and Theorem \ref{theorem_parallel_ass^III} is a combinatorial description of the pairs of parallel facets of $\AssIII(\mathbf{a})$.

\subsubsection{Correspondence between faces of the polytope and subdivisions of an $(n+3)$-gon}

Each face of $\AssIII(\mathbf{a})$ can be represented as the set of points in $\AssIII(\mathbf{a})$ maximizing a linear function. Figure \ref{fig_corresp} illustrates an example of a natural correspondence between faces of $\AssIII(\mathbf{a})$ maximizing a linear function, and subdivisions of an $(n+3)$-gon. This correspondence works as follows. Let $w=(w_1,\dots , w_{n+1})\in (\R^{n+1})^*$ be a linear function and $F_w$ be the face of $\AssIII(\mathbf{a})$ that maximizes it. The subdivision $S_w$ that correspond to $F_w$ is a subdivision of an $(n+3)$-gon with vertices labeled in counterclockwise direction from $0$ to $n+2$; 
it is obtained as the common refinement of the subdivisions that are induced by
the polygons $H_k$, $k=0,\dots,n+2$, 
where $H_k=\text{conv}\{ i\in \{0,\dots,n+2\} : w_i\geq w_k \}$ with $w_0=w_{n+2}=\infty$ 
(see Figure~\ref{fig_corresp}).

\begin{figure}[ht]
	\centering
	\includegraphics[width=0.8\textwidth]{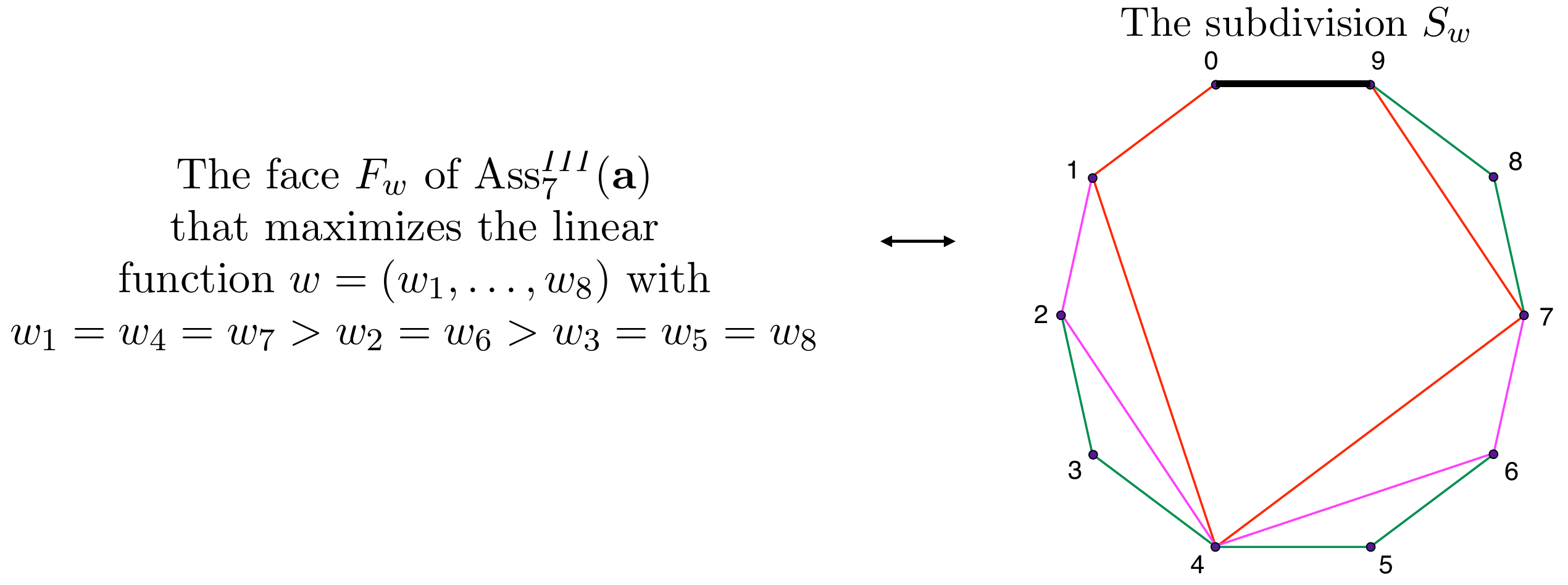}
	\caption{Example of the correspondence between faces of $\mathrm{Ass}_7^{\rm III}(\mathbf{a})$ maximizing a linear function and subdivisions of a convex $(7+3)$-gon.}
	\label{fig_corresp}
\end{figure}

\begin{theorem}\label{theorem_ass^III_correspondece} 
Let $\mathrm{LAss}_n^{\rm III}(\mathbf{a})$ be the face lattice of $\AssIII(\mathbf{a})$ and $\mathrm{LAss}_n$ be the lattice of subdivisions of a convex $(n+3)$-gon ordered by refinement. Then, the correspondence
\[
\begin{array}{rccc}
 \varphi : & \mathrm{LAss}_n^{\rm III}(\mathbf{a})  & \longrightarrow  & \mathrm{LAss}_n\\
  & F_w  & \longrightarrow  & S_w
\end{array}
\]
defines an order-preserving bijection. In particular, 
$\AssIII(\mathbf{a})$ is an associahedron. 
\end{theorem}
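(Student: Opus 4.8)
The plan is to analyze both sides of $\varphi$ through the Minkowski-sum structure of $\AssIII(\mathbf{a})$ and then to match the inclusion order on faces with the refinement order on subdivisions. I would begin with the standard fact that for a Minkowski sum the face selected by a linear functional is the sum of the faces selected on the summands: for any $w=(w_1,\dots,w_{n+1})$,
\[
F_w \;=\; \sum_{1\le i\le j\le n+1}\mathrm{a}_{ij}\,\bigl(\text{the face of }\Delta_{[i,\dots,j]}\text{ maximizing }w\bigr),
\]
and the maximizing face of $\Delta_{[i,\dots,j]}=\mathrm{conv}\{e_i,\dots,e_j\}$ is $\mathrm{conv}\{e_\ell:i\le\ell\le j,\ w_\ell=\max_{i\le m\le j}w_m\}$. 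Hence $F_w$ depends only on the combinatorial data recording, for every interval $[i,j]$, which coordinates attain the maximum of $w$ there; equivalently, $F_w=F_{w'}$ precisely when $w$ and $w'$ lie in the same cone of the normal fan of $\AssIII(\mathbf{a})$, a purely combinatorial condition. This identifies the faces of $\AssIII(\mathbf{a})$ with these maximizer patterns and will be the bookkeeping device throughout.

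Next I would prove that $S_w$ is well defined as a function of $F_w$, i.e. that $F_w=F_{w'}$ forces $S_w=S_{w'}$. This step requires care and is, together with the bijection, the technical heart of the argument: the polygons $H_k=\mathrm{conv}\{i:w_i\ge w_k\}$ a priori involve all the comparisons of $w_k$ against the remaining coordinates, which is strictly more information than the maximizer pattern that determines $F_w$ (two functionals can select the same face while ordering the non-maximal coordinates differently). The point to establish is that passing to the common refinement over all $k$ destroys exactly this surplus information, so that the surviving diagonals---and hence $S_w$---depend only on the normal cone of $w$. I would do this by checking that any perturbation of $w$ preserving every interval-maximizer leaves each diagonal appearing in the common refinement intact, and simultaneously that $S_w$ is genuinely a subdivision of the $(n+3)$-gon into pairwise non-crossing diagonals.

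With well-definedness in hand I would establish order-preservation together with the two extreme cases. For a generic $w$ (distinct coordinates) every interval has a unique maximizer, so each summand contributes a single vertex, $F_w$ is a vertex of $\AssIII(\mathbf{a})$, and the nested polygons $H_k$ cut the $(n+3)$-gon into a full triangulation $S_w$; at the other extreme a constant $w$ gives the whole polytope and the trivial subdivision. In general, replacing some strict inequalities among the $w_i$ by equalities enlarges $F_w$ to a face containing it and, on the polygon side, merges the corresponding level sets and thereby deletes exactly the diagonals that separated them, coarsening $S_w$. This shows that the inclusion order on faces is sent to the refinement order on subdivisions, so $\varphi$ is an order-preserving map of posets.

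It remains to see that $\varphi$ is a bijection, and here I would exploit that $\AssIII(\mathbf{a})$ is already known to be an associahedron, so $\mathrm{LAss}_n^{\rm III}(\mathbf{a})$ and $\mathrm{LAss}_n$ are finite lattices with identical Hasse diagrams. It then suffices to produce an inverse by reading off, from a subdivision $S$, a maximizer pattern (equivalently a representative $w$) and verifying $\varphi(F_w)=S$; injectivity follows from the dictionary of the second step, since distinct normal cones are distinguished by which summand simplices $w$ splits, and this is precisely the information recorded by the diagonals of $S_w$. Because $\varphi$ is then a rank-preserving order-preserving bijection between two lattices with the same number of covering relations, it carries covering relations to covering relations in both directions and is therefore a lattice isomorphism, giving the theorem. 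The main obstacle is the second step: making rigorous the claim that the geometric common-refinement rule reconstructs exactly the combinatorics of the maximizer patterns, so that $S_w$ is both well defined and a complete invariant of the face $F_w$.
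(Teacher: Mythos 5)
A preliminary note on the comparison you were asked for: this preprint states Theorem~\ref{theorem_ass^III_correspondece} without proof (the argument is deferred to the expanded version with Santos mentioned in the abstract), so your proposal has to stand on its own --- and on its own it is a strategy outline, not a proof. The skeleton is right: a linear functional selects on a Minkowski sum the sum of the faces it selects on the summands, so $F_w$ is determined by the pattern recording, for each interval $[i,j]$, the set of maximizers of $w$ on $\{i,\dots,j\}$; the theorem then amounts to showing that $S_w$ encodes exactly this pattern. But every step that carries mathematical weight is announced rather than executed: (i) well-definedness of $S_w$ on faces is left as something you ``would do by checking''; (ii) that $S_w$ is a genuine subdivision (pairwise non-crossing diagonals) is asserted in passing; (iii) surjectivity requires constructing, from an arbitrary subdivision $S$, a functional $w$ with $\varphi(F_w)=S$, which is never carried out; and (iv) injectivity rests on the claim that the diagonals of $S_w$ record ``precisely'' which summands $w$ splits --- which is the statement to be proved, not an argument for it. You name this as ``the main obstacle'' yourself. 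A concrete way to fill (i)--(iv) simultaneously is an explicit dictionary: with the convention $w_0=w_{n+2}=\infty$, a pair $\{p,q\}$, $0\le p<q\le n+2$, is a diagonal of $S_w$ if and only if $\min(w_p,w_q)>w_\ell$ for all $p<\ell<q$. This criterion is visibly a function of the interval-maximizer pattern (it says the maximum of $w$ on $[p,q-1]$ is attained only at $p$ and on $[p+1,q]$ only at $q$), it yields non-crossing in two lines (crossing diagonals $\{p,q\}$, $\{r,s\}$ with $p<r<q<s$ would force $w_q>w_r$ and $w_r>w_q$), and it makes both the inverse construction and injectivity finite combinatorial checks. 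Nothing of this sort appears in the proposal.

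Separately, your endgame is invalid as stated: an order-preserving bijection between finite lattices with the same number of covering relations need not be an isomorphism, since order-preserving maps need not send covers to covers. For instance, the pentagon $N_5$ (covers $\hat0\lessdot a\lessdot b\lessdot\hat1$ and $\hat0\lessdot c\lessdot\hat1$) and the lattice with covers $\hat0\lessdot x$, $\hat0\lessdot y$, $x\lessdot t$, $y\lessdot t$, $t\lessdot\hat1$ both have five elements and five covers, and sending $\hat0,a,b,c,\hat1$ to $\hat0,x,t,y,\hat1$ is an order-preserving bijection, yet the lattices are not isomorphic. Your hedge ``rank-preserving'' would repair this (in a graded poset, covers are exactly comparable pairs in adjacent ranks), but rank-preservation of $\varphi$ --- that $\dim F_w$ plus the number of diagonals of $S_w$ equals $n$ --- is again part of the unproven dictionary. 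Alternatively, since you invoke Postnikov's proposition, both lattices have the same finite cardinality, so surjectivity alone would give bijectivity, and isomorphy could be deduced by counting comparable pairs rather than covers; but every such shortcut still runs through the combinatorial core (i)--(iv) that the proposal postpones.
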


\subsubsection{Combinatorial description of the pairs of parallel facets}

\begin{theorem} \label{theorem_parallel_ass^III}
$\AssIII(\mathbf{a})$ has $n$ pairs of parallel facets. They correspond to the pairs of diagonals $(\{ n+2,i \} , \{ 0,i+1 \})$ for $1\leq i \leq n$, as illustrated in 
{\normalfont Figure~\ref{Parallel_ass^III}}.
\end{theorem}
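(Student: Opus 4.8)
The plan is to determine all facet normals of $\AssIII(\mathbf{a})$ and then read off parallelism directly. Since $\AssIII(\mathbf{a})=\sum_{1\le i\le j\le n+1}\mathrm{a}_{ij}\Delta_{[i,\dots,j]}$ is a Minkowski sum of simplices, it is a generalized permutahedron lying in a hyperplane $\{x\in\R^{n+1}:\sum_k x_k=\text{const}\}$, and every facet lies in a hyperplane whose normal is an indicator vector $\mathbf{e}_S=\sum_{k\in S}e_k$ for some $S\subseteq[n+1]=\{1,\dots,n+1\}$. First I would identify the relevant $S$: writing the face of $\AssIII(\mathbf{a})$ in direction $-\mathbf{e}_S$ as the Minkowski sum of the faces $(\Delta_{[i,\dots,j]})_{-\mathbf{e}_S}$ and computing its dimension, one finds this face has dimension $n-1$ exactly when $S=[i,j]$ is a proper interval $\varnothing\neq[i,j]\subsetneq[n+1]$. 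There are $\binom{n+2}{2}-1=\tfrac{n(n+3)}{2}$ such intervals, which matches the number of diagonals (hence of facets), so these are all the facets, the one for $[i,j]$ having outer normal $-\mathbf{e}_{[i,j]}$.

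Two facets are parallel precisely when their normals agree in the quotient $\R^{n+1}/\langle\mathbf{1}\rangle$, i.e.\ when $\mathbf{e}_S-\lambda\,\mathbf{e}_{S'}\in\langle\mathbf{1}\rangle$ for some scalar $\lambda$. The combinatorial heart of the argument is the claim that, for distinct proper nonempty $S,S'$, this holds for some $\lambda$ if and only if $S'=[n+1]\setminus S$. This I would prove by partitioning $[n+1]$ into the blocks $S\cap S'$, $S\setminus S'$, $S'\setminus S$ and $[n+1]\setminus(S\cup S')$, on which $\mathbf{e}_S-\lambda\mathbf{e}_{S'}$ takes the values $1-\lambda,\,1,\,-\lambda,\,0$; these can all agree across the nonempty blocks only when $S$ and $S'$ are complementary (the only nonempty blocks are then $S\setminus S'=S$ and $S'\setminus S=S'$, forcing $\lambda=-1$). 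Imposing in addition that both $S=[i,j]$ and its complement be intervals forces $S$ to be a prefix $[1,j]$, with complementary partner the suffix $[j+1,n+1]$. Hence the pairs of parallel facets are exactly $\{[1,j],[j+1,n+1]\}$ for $j=1,\dots,n$, giving precisely $n$ pairs.

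It remains to name the corresponding diagonals, for which I would feed $w=-\mathbf{e}_{[i,j]}$ into the correspondence of Theorem~\ref{theorem_ass^III_correspondece}: all polygons $H_k$ are trivial except those with $w_k=0$, whose common value is $H=\mathrm{conv}(\{0,\dots,i-1\}\cup\{j+1,\dots,n+2\})$, and the unique nontrivial diagonal $H$ cuts off is $\{i-1,j+1\}$. Thus the facet of $[1,j]$ corresponds to the diagonal $\{0,j+1\}$ and that of $[j+1,n+1]$ to $\{j,n+2\}$, so the pair $\{[1,j],[j+1,n+1]\}$ is exactly the pair $(\{n+2,i\},\{0,i+1\})$ with $i=j$, as claimed. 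I expect the main obstacle to be the facet-normal identification in the first step: one must check that the proper intervals, taken with the correct sign $-\mathbf{e}_{[i,j]}$, are the facet normals, since the seductive guess $+\mathbf{e}_{[i,j]}$ is wrong — in the $+$ direction an interior interval cuts out a codimension-$2$ face rather than a facet. Once the normals and this facet-to-diagonal dictionary are correct, the count of parallel pairs and their identification are essentially forced.
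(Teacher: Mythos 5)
Your proposal is correct, and it is essentially the dual of the paper's argument, carried out with more rigor at the step the paper glosses over. The paper works on the primal side: for each of three types of diagonals it lists the maximizing linear functional and the direction of the affine hull of the corresponding facet (a Minkowski sum such as $\Delta_{[1,\dots,i]}+\Delta_{[i+1,\dots,n+1]}$), and then concludes with ``using this information one sees'' that only the prefix/suffix pairs share a direction. Your proof works on the normal side, and the functionals the paper lists are exactly your normals: type $\{n+2,i\}$ is $-\mathbf{e}_{[i+1,n+1]}$, type $\{0,i+1\}$ is $-\mathbf{e}_{[1,i]}$, and type $\{i,j\}$ is $-\mathbf{e}_{[i+1,j-1]}$, all modulo $\langle\mathbf{1}\rangle$. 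You add two genuine ingredients that the paper leaves implicit: (i) an intrinsic identification of the facets --- the normal fan of $\AssIII(\mathbf{a})$ coarsens the braid fan, so facet normals are indicator vectors modulo $\langle\mathbf{1}\rangle$, and a dimension count of the summand faces shows that exactly the proper nonempty intervals, taken with the sign $-\mathbf{e}_{[i,j]}$, give $(n-1)$-dimensional faces --- whereas the paper reads the facet list off the diagonal correspondence; and (ii) the block lemma showing $\mathbf{e}_S-\lambda\mathbf{e}_{S'}\in\langle\mathbf{1}\rangle$ forces $S'=[n+1]\setminus S$ and $\lambda=-1$, which is an explicit proof of the ``no other parallel pairs'' claim that the paper asserts without argument. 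The cost of your route is reliance on two standard generalized-permutahedron facts (both available in Postnikov \cite{Po05}, which the paper cites); the benefit is a self-contained, airtight parallelism criterion. Your final translation of the intervals $[1,j]$ and $[j+1,n+1]$ into the diagonals $\{0,j+1\}$ and $\{j,n+2\}$ via Theorem~\ref{theorem_ass^III_correspondece} agrees exactly with the paper's dictionary, so the identification of the $n$ pairs matches.
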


\begin{figure}[ht]
	\centering
	\includegraphics[width=0.7\textwidth]{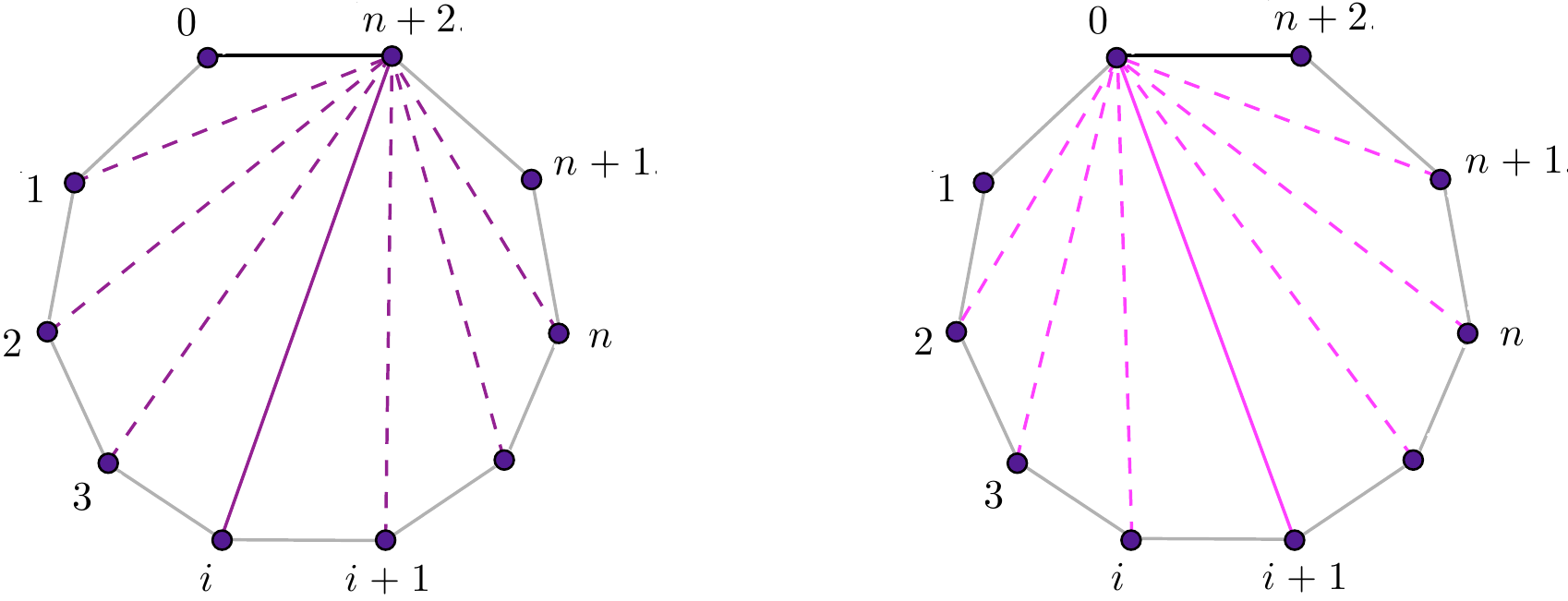}
	\caption{Diagonals of the $(n+3)$-gon corresponding to the pairs of parallel facets of $\AssIII(\mathbf{a})$}
	\label{Parallel_ass^III}
\end{figure}

\begin{proof}
A convex $(n+3)$-gon has three types of diagonals:
\begin{enumerate}
\item The diagonals of the form $\{n+2,i\}$, $1\leq i \leq n$. They correspond to facets of $\AssIII(\mathbf{a})$ maximizing the linear functions $w=(w_1,\dots , w_{n+1})$ with $w_1=\cdots = w_i > w_{i+1}=\cdots=w_{n+1}$. These facets are parallel to the affine spaces generated by $\Delta_{[1,\cdots , i]}+\Delta_{[i+1,\cdots , n+1]}$.
\item The diagonals of the form $\{0,i+1\}$, $1\leq i \leq n$. They correspond to facets of $\AssIII(\mathbf{a})$ maximizing the linear functions $w=(w_1,\dots , w_{n+1})$ with $w_{i+1}=\cdots=w_{n+1}>w_1=\cdots = w_i $. These facets are parallel to the affine spaces generated by $\Delta_{[1,\cdots , i]}+\Delta_{[i+1,\cdots , n+1]}$.
\item The diagonals of the form $\{i,j\}$, $1\leq i < i+1< j \leq n+1$.  They correspond to the facets of $\AssIII(\mathbf{a})$ maximizing the linear functions $w=(w_1,\dots , w_{n+1})$ with $w_1=\cdots = w_i=w_j=\cdots = w_{n+1}> w_{i+1}=\cdots = w_{j-1}$. These facets are parallel to the affine spaces generated by $\Delta_{1,\cdots i,j,\cdots ,n+1}+\Delta_{i+1,\cdots,j-1}$.
\end{enumerate}

\noindent
Using this information one sees that $\AssIII(\mathbf{a})$ has exactly $n$ pairs of parallel facets, and that they correspond to the pairs of diagonals $(\{ n+2,i \} , \{ 0,i+1 \})$.
\end{proof}

\section{The three types of associahedra are affinely non-equivalent} 
\label{twotheorems}

We have described three realizations of the associahedron together with combinatorial descriptions of the pairs of parallel facets. Each of the three constructions depends on a large number of parameters: 
\begin{itemize}
\item $\AssI(Q)$ depends on $2(n+3)$ parameters corresponding to the coordinates of the convex $(n+3)$-gon $Q\subset \R^2$.
\item $\AssII( A_n )$ depends on $\binom{n+1}2+n=\frac{n(n+3)}{2}$ parameters corresponding to the almost positive roots of type $A_n$, which determine the distances of the corresponding
hyperplanes from the origin.
\item $\AssIII( \mathbf a )$ depends on $\binom{n+2}2$ 
parameters corresponding to the values of $a_{ij}$, $1\leq i \leq j \leq n+1$.
\end{itemize}
One may wonder (and the second author has repeatedly asked) how can one choose 
the parameters in an appropriate way so that these constructions would 
produce the same result. 
The main results of this paper, given by Theorem \ref{theorem 1} and Theorem \ref{theorem 2},
show that this cannot be achieved at all: The three constructions 
produce affinely non-equivalent polytopes, for all choices of parameters. 

\begin{theorem}\label{theorem 1}
For $n\geq 2$, $\AssI(Q)$ is affinely equivalent to neither $\AssII( A_n )$ nor  
$\AssIII( \mathbf a )$.
\end{theorem}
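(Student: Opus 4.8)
The plan is to prove Theorem~\ref{theorem 1} using the number of pairs of parallel facets as an affine invariant. The key observation is that if two polytopes are affinely equivalent, then an affine map carries facets to facets and preserves parallelism: if $F$ is parallel to $F'$ in one polytope, their images are parallel facets in the other, and conversely. Hence the number of pairs of parallel facets is an invariant under affine equivalence. I would isolate this as the driving principle at the start of the argument.

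With that invariant in hand, the proof is essentially a counting comparison drawing directly on the three theorems already established in the excerpt. By Theorem~\ref{theo_par_cont^I}, the secondary-polytope realization $\AssI(Q)$ has \emph{no} pairs of parallel facets for $n\geq 2$. By Theorem~\ref{theorem_Ass^II_parallel} and Theorem~\ref{theorem_parallel_ass^III}, both $\AssII(A_n)$ and $\AssIII(\mathbf a)$ have exactly $n$ pairs of parallel facets. Since $n\geq 2$ forces $n\neq 0$, the number of parallel-facet pairs of $\AssI(Q)$ differs from that of each of the other two realizations. Therefore no affine map can send $\AssI(Q)$ to either $\AssII(A_n)$ or $\AssIII(\mathbf a)$, and the three are affinely non-equivalent as claimed. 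The argument works uniformly for every choice of parameters precisely because the cited theorems hold for arbitrary parameter choices.

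First I would state and justify the parallelism invariance lemma carefully, since this is the one genuinely nontrivial step: I must confirm that an affine transformation $\Phi(x)=Ax+b$ (with $A$ invertible) maps the facets of one polytope bijectively to the facets of the other, and that two facets $F,F'$ are parallel (their affine hulls have the same direction subspace) if and only if $\Phi(F),\Phi(F')$ are parallel. The ``only if'' and ``if'' both follow because $A$ is a linear isomorphism on direction subspaces: $\Phi$ translates the affine hull of $F$ to that of $\Phi(F)$ while replacing its direction space $U$ by $A(U)$, and $A(U)=A(U')$ iff $U=U'$. Thus parallelism is preserved in both directions and the count is invariant. After establishing this, the rest is immediate bookkeeping with the three cited theorems.

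The main obstacle, such as it is, lies not in difficulty but in precision: I must make sure the notion of ``parallel facets'' used across the three theorems is the same one preserved by affine maps, namely equality of the facets' direction subspaces (equivalently, parallel supporting hyperplanes), and that the counts $0$ versus $n$ are being compared under a consistent convention. There is no geometric estimate or delicate combinatorial construction remaining — the heavy lifting was done in Theorems~\ref{theo_par_cont^I}, \ref{theorem_Ass^II_parallel}, and \ref{theorem_parallel_ass^III}. I expect the entire proof to be short, with its whole content being the invariance lemma followed by the inequality $0\neq n$ for $n\geq 2$.
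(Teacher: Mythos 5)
Your proof is correct and follows the same route as the paper: the paper's own proof is exactly the observation that affine maps preserve pairs of parallel facets, combined with Theorem~\ref{theo_par_cont^I} (no parallel facets for $\AssI(Q)$) versus Theorems~\ref{theorem_Ass^II_parallel} and~\ref{theorem_parallel_ass^III} ($n$ pairs for the other two). Your only addition is spelling out the invariance of parallelism under an affine map $x\mapsto Ax+b$, which the paper takes for granted in a single sentence.
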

 
\begin{proof}
By Theorem \ref{theo_par_cont^I}, $\AssI(Q)$ has no pairs of parallel facets, while 
$\AssII( A_n )$ and  $\AssIII( \mathbf a )$ do. Since affine maps preserve pairs of 
parallel facets, $\AssI(Q)$ is affinely equivalent to neither $\AssII( A_n )$ nor 
$\AssIII( \mathbf a )$.
\end{proof}

\begin{theorem} \label{theorem 2} 
$\AssII( A_n )$ and  $\AssIII( \mathbf a )$ are not affinely equivalent for $n\ge3$.
\end{theorem}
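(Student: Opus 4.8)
The plan is to use the combinatorial descriptions of parallel facets that we have already established for both constructions and to extract an affine invariant from them. Both $\AssII(A_n)$ and $\AssIII(\mathbf a)$ have exactly $n$ pairs of parallel facets, so a crude count does not separate them; instead I would look at how these pairs of parallel facets \emph{meet each other combinatorially}, since affine equivalence must preserve not only the set of parallel pairs but the entire face-incidence structure among them. Concretely, each pair of parallel facets corresponds to a pair of diagonals of the $(n+3)$-gon, so I would translate the question into the following purely combinatorial one: for each construction, which pairs of ``parallel-facet diagonals'' can coexist in a common subdivision (equivalently, which pairs of parallel facets have a nonempty common intersection in a face of codimension two)?

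First I would record the two lists explicitly. By Theorem~\ref{theorem_Ass^II_parallel}, the parallel pairs of $\AssII(A_n)$ are indexed by $\{\alpha_i,-\alpha_i\}$ for $i=1,\dots,n$, i.e.\ the diagonal for $-\alpha_i$ (on the snake) and the diagonal for $\alpha_i$ that crosses exactly it. By Theorem~\ref{theorem_parallel_ass^III}, the parallel pairs of $\AssIII(\mathbf a)$ are the pairs $(\{n+2,i\},\{0,i+1\})$ for $1\le i\le n$. Next I would build, for each polytope, a graph (or incidence pattern) on the $n$ parallel-pairs: put an edge between pair $i$ and pair $j$ precisely when some facet of pair $i$ and some facet of pair $j$ are non-parallel-and-adjacent, which combinatorially amounts to asking whether the corresponding diagonals are mutually non-crossing and hence jointly extendable to a common subdivision. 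The key point is that an affine equivalence induces a combinatorial automorphism carrying parallel pairs to parallel pairs, hence an isomorphism of these incidence patterns; so if the two patterns are non-isomorphic, the polytopes cannot be affinely equivalent.

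The main work, and the step I expect to be the real obstacle, is computing these incidence patterns correctly from the diagonal descriptions and verifying that they genuinely differ for $n\ge3$. For $\AssIII(\mathbf a)$ the diagonals $\{n+2,i\}$ and $\{0,i+1\}$ all emanate from the two fixed ``corner'' vertices $0$ and $n+2$, which gives the family a very structured, almost linear crossing pattern; by contrast the snake diagonals $-\alpha_i$ together with their crossing partners $\alpha_i$ in $\AssII(A_n)$ are distributed along the snake and cross one another according to the nested/interleaving structure of consecutive roots. I would compute the crossing relations in each case and read off an invariant—for instance the number of parallel pairs that are simultaneously compatible with a fixed pair, or more robustly the isomorphism type of the incidence graph—and check that these invariants disagree once $n\ge3$ (the hypothesis $n\ge3$ is exactly what is needed for the patterns to be large enough to differ, since for small $n$ the graphs may coincide).

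Once the two incidence patterns are shown to be non-isomorphic, the conclusion is immediate: an affine equivalence $\AssII(A_n)\to\AssIII(\mathbf a)$ would carry parallel facets to parallel facets and preserve all face incidences, inducing an isomorphism between the incidence patterns, contradicting their non-isomorphism. I would therefore structure the proof as (i) list the parallel-pair diagonals for each construction, (ii) determine the crossing/compatibility relations among them and package these into a combinatorial invariant, (iii) show the invariants differ for $n\ge3$, and (iv) invoke affine-invariance of the parallel-facet incidence structure to conclude non-equivalence. The delicate part is isolating an invariant that is both easy to compute from the diagonal pictures and provably distinct for all $n\ge3$ rather than just for a few small sample values.
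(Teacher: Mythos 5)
Your strategy---restricting attention to the facets that belong to parallel pairs and using their mutual intersection pattern as an affine invariant---is exactly the paper's: it calls such facets \emph{special} and derives Theorem~\ref{theorem 2} from a lemma counting how many special facets each special facet meets. But as written your proposal has a genuine gap, in two respects. First, the concrete invariant you lead with provably fails: if you put an edge between parallel pair $i$ and parallel pair $j$ whenever \emph{some} facet of one meets some facet of the other, then both polytopes yield the complete graph $K_n$. Indeed, in $\AssIII(\mathbf a)$ the diagonals $\{n+2,i\}$ and $\{n+2,j\}$ share the vertex $n+2$ and never cross, so the corresponding facets always meet; and in $\AssII(A_n)$ the snake diagonals $-\alpha_i$ and $-\alpha_j$ never cross either. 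So this pair-level graph cannot distinguish the two polytopes. Second, everything then hinges on the step you explicitly defer (``the delicate part''), namely producing and verifying an invariant that differs for all $n\ge3$; that verification is the entire substance of the theorem, and it is absent from the proposal.

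What does work is a count at the level of the $2n$ individual special facets, which is what the paper's lemma does. Two facets meet exactly when their diagonals do not cross. In $\AssIII(\mathbf a)$ the diagonal $\{n+2,i\}$ crosses $\{0,j+1\}$ precisely when $j\ge i$; hence the facet of $\{n+2,1\}$ meets exactly the $n-1$ facets of the $\{n+2,j\}$ with $j\ne 1$, symmetrically for $\{0,n+1\}$, and every other special facet meets at least $n$ special facets---so exactly two special facets meet exactly $n-1$ others. In $\AssII(A_n)$ each facet $-\alpha_i$ meets $2(n-1)>n-1$ special facets, and each $\alpha_i$ meets the $n-1$ facets $-\alpha_j$ ($j\ne i$) plus at least one of $\alpha_1,\alpha_n$ when $n>3$ (the diagonals $\alpha_i,\alpha_j$ cross only when $|i-j|=1$); for $n=3$ only $\alpha_2$ meets exactly two. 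Thus the number of special facets meeting exactly $n-1$ others is $2$ for $\AssIII(\mathbf a)$ but $0$ (if $n>3$) or $1$ (if $n=3$) for $\AssII(A_n)$, which finishes the proof. Alternatively, your hedged variant can be repaired: count pairs of parallel pairs \emph{all four} of whose facets mutually intersect; $\{\pm\alpha_1\}$ and $\{\pm\alpha_3\}$ form such a configuration in $\AssII(A_n)$ for $n\ge3$, whereas no two parallel pairs of $\AssIII(\mathbf a)$ do. Either way, the crossing computation must actually be carried out; as submitted, the proposal stops exactly where the proof begins.
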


\begin{proof}
We say that a facet $F$ of a polytope $P$ is \emph{special} if there is 
an other facet $F'$ of $P$ which is parallel to $F$. Theorem \ref{theorem 2} follows   
from the following lemma.
\end{proof}

\begin{lemma} ~
\begin{enumerate}
\item $\AssII( A_n )$ and  $\AssIII( \mathbf a )$ both have $2n$ special facets.
\item There are two special facets of $\AssIII( \mathbf a )$ which intersect 
exactly $n-1$ other special facets. 
\item For $n>3$, every special facet of $\AssII( A_n )$ intersects 
more than $n-1$ special facets.\\
For $n=3$, there is only one facet which intersects exactly $n-1=2$ special facets.
\end{enumerate}
\end{lemma}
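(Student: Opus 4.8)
The plan is to encode all three parts through the \emph{crossing graph} on the special diagonals. Recall that in each of these associahedra two facets meet exactly when their diagonals are non-crossing, and (by Theorem~\ref{theorem_Ass^II_parallel} and Theorem~\ref{theorem_parallel_ass^III}) two special facets are parallel exactly when their diagonals cross. Hence, if $F_\delta$ is a special facet, the number of special facets it meets equals $(2n-1)$ minus the number of \emph{other} special diagonals that cross $\delta$. Part~(1) is then immediate: Theorem~\ref{theorem_Ass^II_parallel} and Theorem~\ref{theorem_parallel_ass^III} each exhibit exactly $n$ parallel pairs, supported on $2n$ distinct diagonals, so each polytope has exactly $2n$ special facets and no facet is parallel to two others.

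For part~(2) I would use the explicit special diagonals of $\AssIII(\mathbf a)$ from Theorem~\ref{theorem_parallel_ass^III}, namely $\{n+2,i\}$ and $\{0,i+1\}$ for $1\le i\le n$. Each of these is incident to vertex $0$ or to vertex $n+2$, so any two sharing one of these vertices are non-crossing; the only crossings occur between $\{n+2,i\}$ and $\{0,j+1\}$, and reading off the cyclic order $0,1,\dots,n+2$ shows that these cross precisely when $i\le j$. A short count then gives that $\{n+2,i\}$ meets $n+i-2$ other special facets and $\{0,j+1\}$ meets $2n-1-j$ of them. The minimum of these values is $n-1$, attained exactly at $\{n+2,1\}$ and $\{0,n+1\}$; these are the two special facets claimed in~(2).

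For part~(3) I would work in the snake model of $\AssII(A_n)$ and determine the crossing relations among the $2n$ special roots $\pm\alpha_1,\dots,\pm\alpha_n$. Three facts suffice: (i)~the $-\alpha_i$ are pairwise non-crossing, being the diagonals of the snake triangulation; (ii)~$\alpha_i$ crosses $-\alpha_j$ if and only if $i=j$; and (iii)~$\alpha_i$ crosses $\alpha_j$ if and only if $|i-j|=1$. Granting these, the crossing graph on the special roots is the path $\alpha_1-\alpha_2-\cdots-\alpha_n$ together with the matching $\{\alpha_i,-\alpha_i\}$, the vertices $-\alpha_i$ being otherwise isolated. Consequently the number of special facets met is $2n-2$ for each $-\alpha_i$, it is $2n-3$ for the two end roots $\alpha_1$ and $\alpha_n$, and it is $2n-4$ for each interior root $\alpha_i$ with $2\le i\le n-1$. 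Comparing with $n-1$ yields exactly the two cases of the statement: for $n>3$ all three quantities exceed $n-1$, whereas for $n=3$ the unique interior root $\alpha_2$ is the only special facet with $2n-4=2=n-1$.

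The main obstacle is (iii) (and, to a lesser extent, (ii)): this is the only place where the geometry of the snake is really used. I would obtain it by identifying $\alpha_i$ with the flip of the snake diagonal $-\alpha_i$, i.e.\ with the second diagonal of the quadrilateral $Q_i$ formed by the two snake triangles adjacent to $-\alpha_i$; this makes (ii) transparent. For $|i-j|\ge2$ the quadrilaterals $Q_i$ and $Q_j$ use disjoint pairs of snake triangles, hence have disjoint interiors, which forces $\alpha_i$ and $\alpha_j$ to be non-crossing; while $Q_i$ and $Q_{i+1}$ share exactly one snake triangle, and inspecting its three vertices together with the two apexes shows that $\alpha_i$ and $\alpha_{i+1}$ genuinely cross. (Equivalently one may write the diagonals down explicitly and check all crossings directly, as in the cases $n=3,4$.) Finally, since an affine isomorphism carries facets to facets preserving both incidences and parallelism, the number of special facets meeting exactly $n-1$ other special facets is an affine invariant; by~(2) it equals two for $\AssIII(\mathbf a)$, while by~(3) it equals zero for $n>3$ and one for $n=3$ in $\AssII(A_n)$. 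This contradiction is exactly what proves Theorem~\ref{theorem 2}.
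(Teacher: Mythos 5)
Your proof is correct and follows essentially the same route as the paper: reduce intersection of special facets to non-crossing of the special diagonals listed in Theorems~\ref{theorem_Ass^II_parallel} and~\ref{theorem_parallel_ass^III}, then count crossings --- your explicit counts ($n+i-2$ and $2n-1-j$ for $\AssIII(\mathbf a)$, and the degrees $2n-2$, $2n-3$, $2n-4$ for $\AssII(A_n)$) just make precise the terser estimates the paper reads off its figures. One caveat: your parenthetical claim that two special facets are parallel \emph{exactly} when their diagonals cross is false (e.g.\ $\alpha_1$ and $\alpha_2$ cross, yet by Theorem~\ref{theorem_Ass^II_parallel} their facets are not parallel, since the only parallel pairs are $\{\alpha_i,-\alpha_i\}$), but this misstatement is never actually used: parallelism enters your argument only through the lists of special diagonals supplied by the two theorems, so the proof stands.
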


\begin{proof}~\\ 
(1) This has already been proved in Theorem \ref{theorem_Ass^II_parallel} and Theorem \ref{theorem_parallel_ass^III}. \\
(2) Two special facets intersect if and only if the corresponding \emph{special diagonals} do not cross. Figure \ref{Parallel_ass^III} shows the special diagonals associated to the construction of $\AssIII( \mathbf a )$. Among these, $\{n+2,1\}$ and $\{0,n+1\}$ have the property that they do not cross exactly $n-1$ other special diagonals. Therefore, the two special facets of $\AssIII( \mathbf a )$ corresponding to these two diagonals satisfy the desired condition.  \\
(3) The special facets of $\AssII( A_n )$ correspond to the diagonals of the form $\alpha_i$ or $-\alpha_i$, $i=1,\dots,n$, that are shown in Figure \ref{Parallel_ass^II}. 
A diagonal $-\alpha_i$ does not cross the $2(n-1)$ diagonals of the form $\alpha_j$ or $-\alpha_j$ with $j\neq i$. On the other hand, when $n>3$, a diagonal $\alpha_i$ does not cross the $n-1$ diagonals $-\alpha_j$ with $j\neq i$ and neither one between $\alpha_1$ and $\alpha_n$. Therefore, when $n>3$ any special facet of $\AssII( A_n )$ intersects more than $n-1$ other special facets.
In the case $n=3$, the facet corresponding to the diagonal $\alpha_2$ is the only special facet that intersects exactly $n-1=2$ other special facets.   
\end{proof}
 

\end{document}